%%%%%%%%%%%%%%%%%%%%%%%%%%%%%%%%%%%%%%%%%
\documentclass[12pt]{amsart}
\usepackage{amssymb}
%\usepackage{showkeys}
%\usepackage{showlabels}

%\usepackage[backref]{hyperref}
%\renewcommand\eqref[1]{(\ref{#1})} %Need with hyperref
%\usepackage{refcheck}

%%%%%%%%%%%%%%%%%%%%%%%%%%
\setlength{\textwidth}{15.2cm}
\setlength{\textheight}{22.7cm}
\setlength{\topmargin}{0mm}
\setlength{\oddsidemargin}{3mm}
\setlength{\evensidemargin}{3mm}
\setlength{\footskip}{1cm}
\setlength{\marginparwidth}{2.4cm}

%%%%%%%%%%%%%%%numbering%%%%%%%%%%%%%%%%%%%
\setcounter{section}{0}
\numberwithin{equation}{section}
\pagestyle{plain}

%%%%%%%%%%%%%%%%%
\theoremstyle{plain}
\newtheorem{theorem}{Theorem}[section]
\newtheorem{proposition}[theorem]{Proposition}
\newtheorem{cor}[theorem]{Corollary}
\newtheorem{lemma}[theorem]{Lemma}

\theoremstyle{definition}

\theoremstyle{remark}
\newtheorem{rem}{Remark}[section]

%%%%%%%%%%%%%Local Definitions%%%%%%%%%%%%%
\def\R{{\mathbb R}}

\def\Cpx{{\mathbb C}}

\def\b#1{{\left\{{#1}\right\}}}

\def\n#1{{\left\|{#1}\right\|}}

\def\supp{\operatorname{supp}}

\def\Rn{{{\mathbb R}^n}}

%\def\dint{\displaystyle\int}

%%%%%%%%%%%%%%%%Title Data%%%%%%%%%%%%%%%%%%%%%

%\title{Bochner's extension problem in the critical and
%supercritical cases}
\title{Criteria for Bochner's extension problem}

\author[]{Michael Ruzhansky and Mitsuru Sugimoto}
\address{
  Michael Ruzhansky:
  \endgraf
  Department of Mathematics
  \endgraf
  Imperial College London
  \endgraf
  180 Queen's Gate, London SW7 2AZ, UK
  \endgraf
  {\it E-mail address} {\rm m.ruzhansky@imperial.ac.uk}
  \endgraf
  \medskip
  Mitsuru Sugimoto:
  \endgraf
  Department of Mathematics, Graduate School of Science
  \endgraf
  Osaka University
  \endgraf
  Machikaneyama-cho 1-16, Toyonaka, Osaka 560-0043, Japan
  \endgraf
  {\it E-mail address} {\rm sugimoto@math.sci.osaka-u.ac.jp}
  }
\thanks{2000 MSC Classification 35B60; 35D10. \\
 \indent
The first author was supported by the JSPS Invitational
 Research Fellowship.}

\date{\today}
%\date{May 31, 1997}
%%%%%%%%%%%%%%%Document%%%%%%%%%%%%%%%%%%%%%%%%
\begin{document}
%\newpage
\begin{abstract}
A necessary and sufficient condition for the resolution
of the weak extension problem is given.
This criterion is applied to also give a criterion for the
solvability of the classical Bochner's extension problem
in the $L^p$--category. The solution of the 
$L^p$--extension problem by Bochner \cite{Bo}
giving the relation between the order of the operator,
the dimension, and index $p$, for which 
the $L^p$--extension property holds, can be viewed
as a subcritical case of the general $L^p$--extension problem. 
In general, this
property fails in some critical and in all supercritical
cases. In this paper, the $L^p$--extension problem
is investigated for operators of all orders and 
for all $1\leq p\leq\infty$. Necessary and sufficient
conditions on the subset of $L^p$ are given for
which the $L^p$--extension property still holds,
in the critical and supercritical cases.
\end{abstract}

\maketitle

\section{Introduction}

In this paper we investigate the removable singularity problem
for a partial differential operator $P=P(x,D)$.
Let us assume that the open set $X\subset\Rn$ contains 
the origin
$$
A=\{0\}.
$$
We assume that the distribution $u\in\mathcal{D}'(X)$ 
satisfies equation 
\begin{equation}\label{EQ:Pwithout}
Pu=0 \textrm{ on } X\backslash A
\end{equation} 
on the punctured space $X\backslash A$, and 
the question is whether 
$u$ extends to a solution of $Pu=0$ on the whole space $X$.
We assume that operator $P$ is given by
$$
 P(x,D)=\sum_{|\alpha|\leq m} a_\alpha(x) \partial^\alpha,
$$
with complex-valued coefficients $a_\alpha(x)$.
Equalities to zero on $X$ or on
$X\backslash A$ are always understood in the distribution sense.
We will also discuss the cases when $P$ is a matrix-valued
operator and when 
$A$ is a smooth submanifold of $X$.

We will first give a criterion for the solvability of this problem.
The classical Bochner's solution gives the relation
$m\leq n(1-1/p)$, $1\leq p<\infty$, for the extension to
hold in $L^p(X)$, and the extension property is known to fail in
general for other exponents. In this paper we will first give
necessary and sufficient conditions for the solvability 
of the general weak extension problem in the distribution
space $\mathcal{D}'(X)$. Moreover, in the $L^p$--setting this will
recover Bochner's result (which can be viewed as the
subcritical case of the general $L^p$--extension problem), 
but will also yield the necessary and sufficient criterion
for the critical ($L^\infty$) and supercritical
($L^p$, $1\leq p\leq\infty$, and $m>n(1-1/p)$) cases.

The main idea
%, which comes from the spirit of the Colombeau algebra,
is to consider the asymptotic behaviour of the quantity
\[
||Q(aPu*\phi_\epsilon)||_{L^r}
\]
as $\epsilon>0$ tends to $0$, where
$a\in C_0^\infty(X)$, $\phi\in C_0^\infty(\R^n)$,
$\phi_\epsilon(x)=\epsilon^{-n}\phi(x/\epsilon)$,
and $Q$ is a partial differential operator with constant coefficients.
The criterion can be described by using the order of its behaviour
in $\epsilon$.
The precise statement of this main result (Theorem \ref{cor:all-U-Lp})
will be given in Section \ref{Main}.

As an application of this criterion,
we can cover the related known and prove new results on
the $L^p$--extension problem, namely
we will assume that solution $u$ to \eqref{EQ:Pwithout}
satisfies $u\in L^p(X)$, $1\leq p\leq\infty$.
We will say that the {\em $L^p$--extension holds}
if equality \eqref{EQ:Pwithout} with $u\in L^p(X)$
implies that $Pu=0$ on $X$. Sometimes we will need to
specify $u$ in this implication, in which case we will
say that the $L^p$--extension holds for $u$.
Conversely, we will say that 
the {\em $L^p$--extension fails} if there is $u\in L^p(X)$
such that \eqref{EQ:Pwithout} holds but $Pu\not=0$ on $X$.
The following result
goes back to Bochner \cite{Bo}, and implies that the
$L^p$--extension holds for 
$m\leq n(1-1/p)$ and $1\leq p<\infty$.

\begin{theorem}\label{Th-A}
  Let $u\in L^p(X)$, $m\leq n(1-1/p)$,
  $1\leq p<\infty$, and $Pu=0$ on $X\backslash A$.
  Then $Pu=0$ on $X$.
\end{theorem}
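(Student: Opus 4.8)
The plan is to reduce the statement to the assertion that the distribution $Pu$, which is supported in $A=\{0\}$, must in fact vanish. Since $Pu=0$ on $X\setminus A$ and $A$ is a single point, the distribution $v:=Pu$ is supported at the origin, hence is a finite linear combination of $\delta_0$ and its derivatives: $v=\sum_{|\beta|\le N} c_\beta \partial^\beta \delta_0$ for some $N$ and constants $c_\beta$. The goal is to show all $c_\beta=0$ under the hypothesis $u\in L^p(X)$ with $m\le n(1-1/p)$.

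First I would establish an order bound: if $u\in L^p$ then $Pu$ has order at most $m$ as a distribution in the sense that it extends to act on $C^m$ functions; more precisely, testing $v$ against $a\in C_0^\infty(X)$ we have $\langle v,a\rangle = \langle u, P^{*}a\rangle$, and since $u\in L^p\subset L^1_{\mathrm{loc}}$ this pairing is controlled by $\|P^{*}a\|_{L^{p'}}$, which involves only derivatives of $a$ up to order $m$. Thus in the representation $v=\sum_{|\beta|\le N}c_\beta\partial^\beta\delta_0$ we may take $N\le m$. The second and main step is the quantitative one: I would test $v$ against a rescaled bump. Fix $\phi\in C_0^\infty(\R^n)$ with $\phi\equiv 1$ near $0$, and for a multi-index $\beta$ with $|\beta|=k\le m$ choose a test function $a_\epsilon$ adapted to scale $\epsilon$ — essentially $a_\epsilon(x)=x^\beta\psi(x/\epsilon)/\beta!$ type constructions — so that $\langle \partial^\gamma\delta_0, a_\epsilon\rangle$ isolates the coefficient $c_\beta$ while $\|P^{*}a_\epsilon\|_{L^{p'}}$ scales like $\epsilon^{-m+n/p'}=\epsilon^{-(m-n(1-1/p))}$. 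Under the subcritical hypothesis $m\le n(1-1/p)$ this exponent is $\le 0$, so $\|P^{*}a_\epsilon\|_{L^{p'}}$ stays bounded (or decays) as $\epsilon\to0$, while on the other side the pairing $\langle u,P^{*}a_\epsilon\rangle$ tends to $0$ because $P^{*}a_\epsilon$ is supported in a shrinking ball $|x|\le C\epsilon$ and $u\in L^p$ is absolutely continuous with respect to measure, i.e. $\int_{|x|\le C\epsilon}|P^{*}a_\epsilon||u|\le \|u\|_{L^p(|x|\le C\epsilon)}\|P^{*}a_\epsilon\|_{L^{p'}}\to0$. Matching the two computations forces $c_\beta=0$ for every $\beta$ with $|\beta|\le m$.

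The main obstacle I anticipate is the careful bookkeeping of the scaling in the second step: one must choose the test functions $a_\epsilon$ so that $P^{*}$ applied to them does not produce terms that blow up faster than $\epsilon^{-m}$ — this requires that the lower-order terms of $P^{*}$ (and the variable coefficients $a_\alpha$, which contribute only bounded factors near $0$) do not spoil the estimate, and that the leading term genuinely achieves the exponent $-m+n/p'$ rather than something smaller in absolute value, since otherwise one might not capture the top-order coefficients $c_\beta$ with $|\beta|=m$. A clean way to handle this is to treat the coefficients as frozen at the origin up to an error that is $O(\epsilon)$ relative to the leading behaviour, reducing the matter to $P$ with constant coefficients, for which the scaling is transparent. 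The endpoint case $p=1$ (where $p'=\infty$ and $n/p'=0$, so $m\le 0$, i.e. $m=0$) is essentially trivial, and the strict inequality case $m<n(1-1/p)$ gives decay with room to spare; only the borderline $m=n(1-1/p)$ genuinely uses that $\|u\|_{L^p}$ restricted to a null-neighbourhood tends to zero, which is the one place measure-theoretic absolute continuity of the $L^p$ norm (rather than a crude bound) is needed.
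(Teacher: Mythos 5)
Your argument is correct, but it takes a genuinely different route from the paper's. The paper obtains Theorem \ref{Th-A} as a consequence of its general criterion (Theorem \ref{cor:all-U-Lp}) combined with Proposition \ref{cor:all-U-Lp2} with $r=p$: after the structure theorem gives $aPu=S\delta$, one mollifies, bounds $\|aPu*\phi_\epsilon\|_{L^p}$ from above by $C\epsilon^{-m}\|u\|_{L^p}$ via Young's inequality (Lemma \ref{Lemma:1}), bounds $\|S\phi_\epsilon\|_{L^p}$ from below by $c\,\epsilon^{-\kappa-n(1-1/p)}$ modulo lower order via a Fourier nondegeneracy argument (Lemma \ref{Lemma:2}), and compares the two exponents, using the truncation $u_R(x)=\chi(x/R)u(x)$ to handle the borderline case $\kappa=m-n(1-1/p)$ (Lemma \ref{Lemma:basic}). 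You instead work entirely on the dual side: after the same structure-theorem step (your order bound $N\le m$ is correct --- control by $\|P^*a\|_{L^{p'}}$ on a fixed compact set is dominated by the $C^m$ norm --- but in fact dispensable, since your scaling estimate only improves as $|\beta|$ grows), you pair $Pu$ with the explicit test functions $a_\epsilon(x)=x^\beta\psi(x/\epsilon)/\beta!$, which extract exactly $(-1)^{|\beta|}c_\beta$ because $a_\epsilon$ coincides with $x^\beta/\beta!$ near the origin, estimate $\|P^*a_\epsilon\|_{L^{p'}}=O(\epsilon^{|\beta|-m+n/p'})=O(1)$ by direct scaling, and conclude by H\"older restricted to the ball $|x|\le C\epsilon$ together with the absolute continuity of the $L^p$ norm (this is exactly where $p<\infty$ enters, playing the role of the paper's $u_R$ trick). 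One remark: your anticipated obstacle about the leading term ``genuinely achieving'' the exponent is not an issue --- since the pairing with $a_\epsilon$ equals $(-1)^{|\beta|}c_\beta$ identically, only the upper bound on $\|P^*a_\epsilon\|_{L^{p'}}$ is used, and boundedness of the coefficients and their derivatives on a compact neighbourhood suffices, with no need to freeze them at the origin. As for what each approach buys: yours is shorter and more elementary (only H\"older, no mollification, no lower-bound lemma), essentially the classical direct proof of Bochner's theorem; the paper's mollification argument is heavier for this particular statement but yields the two-sided criterion of Theorem \ref{cor:all-U-Lp} and the Harvey--Polking-type Lemma \ref{Lemma:basic}, which are what drive the critical ($p=\infty$, $m=n$) and supercritical results that are the paper's main objective.
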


Theorem \ref{Th-A} is also proved by another method
of Sugimoto and Uchida \cite{SU}.
For a brief explanation of the methods
used in \cite{Bo} and \cite{SU}, see also
\cite{S1} and \cite{S2}.
But we can obtain the same results from our criterion as well,
see Corollary \ref{cor:FS-stronger} and the proof.
A striking feature of this result is that it is independent
of the type of operator $P$, and that its proof is rather simple.
Various generalisations of
this relation have been studied over the years describing
conditions on more general removable sets in terms of
capacities and other quantities (see e.g. 
\cite{Bo}, \cite{HP}, \cite{Li}, \cite{P}, \cite{Po},
\cite{SU}, and references therein, 
to mention only a few). Moreover, most of the existing
literature deals with the subcritical and certain critical
cases of the weak extension problem (in the terminology
introduced below in detail), while the present paper
concentrates on the general critical and supercritical
cases (while recovering subcritical results as well).

In view of Theorem \ref{Th-A}, for all $1\leq p\leq\infty$,
we will call the cases $m<n(1-1/p)$, $m=n(1-1/p)$,
and $m>n(1-1/p)$ to be {\em subcritical, critical}, and 
{\em supercritical}, respectively. Thus, Bochner's theorem
says that for $1\leq p<\infty$ the $L^p$--extension
property holds in the subcritical and critical cases.
We remark that a generalisation of Bochner's
Theorem \ref{Th-A} to the supercritical case
is not generally possible because
the inequality $m\leq n(1-1/p)$ in Theorem \ref{Th-A} 
is sharp. Indeed, it is shown by e.g. John \cite{Jo}
that
if $P$ has analytic coefficients and
is elliptic, then function
\begin{equation}\label{EQ:John}
u(x)=|x|^{m-n}\left\{A(x)+B(x)\log |x|\right\}
\end{equation} 
solves $Pu=\delta$ on $X$, with some functions
$A(x), B(x)$ bounded in a neighbourhood of $0\in X$.
Thus, $Pu=0$ on $X\backslash A$, and we can remark
that $u\in L^p(X)$ for $m>n(1-1/p)$, implying that the
$L^p$--extension property fails for $1\leq p<\infty$
in the supercritical cases.

Theorem \ref{cor:all-U-Lp} clearly also gives necessary
and sufficient conditions when $u\in L^p(X)$,
for all $1\leq p\leq\infty$ and all orders $m$ of operator $P$.
Moreover, this also yields some further simple sufficient conditions
since the quantities in \eqref{EQ:cond-gamma-cora} and
\eqref{EQ:cond-gamma-cor} can be efficiently estimated
for $u\in L^p(X)$. Therefore, our criterion 
will also give sufficient 
conditions on functions for which the $L^p$--extension
property still holds in the supercritical cases
when the $L^p$--extension
property fails in general.
All of such results will be given in Section \ref{Lpext}.

We will also consider the critical case $p=\infty$ 
(especially when $m=n$)
which is excluded from Theorem \ref{Th-A}.
Considering this case has
special meaning because it has many important application.
However, the proof of
Bochner \cite{Bo} breaks down in the case $p=\infty$ and
$m\geq n$.
Moreover, the alternative proof of Theorem \ref{Th-A} given by
Sugimoto and Uchida \cite{SU} 
also breaks down in view of the failure of the boundedness of
pseudo-differential operators of order zero on the space
$L^\infty(X)$.
On the other hand, example
\eqref{EQ:John} shows that the $L^\infty$--extension  
fails for $m>n$.
Thus, the most subtle critical
case is $m=n$. In fact, various capacity methods
(see e.g. Littman \cite{Li}, Harvey and Polking
\cite{HP}, Pokrovskii \cite{P}, or Polking \cite{Po})
are not applicable to this situation in view of 
simple examples given in this paper.
Our criterion will enable us to
carry out a rather comprehensive analysis in this case.
Such discussion will be given in Section \ref{Linfty}.

In Section \ref{Sec:generalisations} we will give several 
generalisations of
the obtained results. In particular, we will discuss
modifications of statement in the case when the set $A$
is not a point, but a smooth submanifold of $X$ of
codimension $k$. In this case we can factor out the
problem to reduce the analysis to the previous situations.
In particular, this yields another proof of the fact that if
$A$ is a (complex) hypersurface in $\Cpx^n$ and 
if a bounded function
is analytic in $\Cpx^n\backslash A$, then it is analytic
in $\Cpx^n$ (in the case $n=1$ this is the celebrated Riemann's
extension theorem, since $A$ is a point). Among other things, in 
Section \ref{Sec:generalisations}
we will also discuss the case when $A$ has a 
real codimension one in $\Cpx^n$, which can be viewed
as the critical $L^\infty$--extension problem.

%\newpage
\section{A criterion for weak extension}
\label{Main}

Let $X\subset\R^n$ be an open set which contains the origin $A=\b{0}$.
Let operator $P$ be a partial differential operator on $X$ of order $m$,
which always means in the paper that it is of the form
\begin{equation}\label{EQ:P-form}
 P=\sum_{|\alpha|\leq m} a_\alpha(x) \partial^\alpha,
\quad \text{$a_\alpha\in C^\infty(X)$}.
\end{equation}
Functions $\b{a_\alpha(x)}_{|\alpha|\leq m}$ may be complex-valued.
If they are constants, 
we call $P$ a partial differential equation
with constant coefficients of order $m$. The condition
of the smoothness of $a_\alpha$ can be weakened in
some cases, see e.g. Remark \ref{nonsmooth} and also
Section \ref{Lpext}.

For the distribution $u\in \mathcal{D}'(X)$ on $X$,
the distribution $Pu\in \mathcal{D}'(X)$ 
is defined
by
\[
\langle Pu,\,\phi\rangle
=\sum_{|\alpha|\leq m}
(-1)^{|\alpha|}\langle u,\,\partial^\alpha(a_\alpha\phi)\rangle
\]
for $\phi\in C_0^\infty(X)$.
It can be also regarded as a distribution on a smaller open set
$X\backslash A$ if we take test functions $\phi$
in $C_0^\infty(X\backslash A)$.
We note that $Pu=0$ on $X\backslash A$
does not always imply $Pu=0$ on $X$,
but we will give a criterion on such weak extension property.
Since the problem is local, we may multiply $P$ by
a cut-off function $a\in C^\infty_0(X)$ at the origin $A$.
Then we can regard $aPu$ as a distribution on $\R^n$.
The idea for the analysis is to consider convolutions of
$a Pu$ with certain families of functions, where
the convolution of a distribution $f$ on $\R^n$ with a 
smooth function  $\phi\in C_0^\infty(\R^n)$
is defined in the usual way by
$$
 (f*\phi)(x)=\langle f,\,\phi(x-\cdot)\rangle. 
$$
For $\phi\in C_0^\infty(\R^n)$ and $\epsilon>0$, we will always denote
\begin{equation}\label{molifier}
\phi_\epsilon(x)=\epsilon^{-n}\phi(x/\epsilon).
\end{equation}
The following is a necessary and sufficient condition for
the weak extension property to hold in the space of
distributions. It will give many application to, in particular,
the $L^p$--extension problem in various settings.

\begin{theorem}\label{cor:all-U-Lp}
Let $X\subset\R^n$ be an open set, let
$A=\b{0}$ be the origin, 
and let
$P$ be a partial differential operator on $X$ 
as in \eqref{EQ:P-form}.
Suppose that $u\in \mathcal{D}'(X)$ satisfies
$$ Pu=0 \textrm{ on } X\backslash A.$$
Then the following statements are equivalent:
\begin{itemize}
\item[(i)] $Pu=0$ on $X$;
\item[(ii)] There
exist $0<r\leq\infty$, $a\in C_0^\infty(X)$
which is non-zero at the origin $A$,
non-zero $\phi\in C_0^\infty(\R^n)$, a sequence
$\epsilon_j\to 0+$, and
a non-zero partial differential operator $Q$
of order $d$ with constant coefficients such that
\begin{equation}\label{EQ:cond-gamma-cora}
 \epsilon_j^{n(1-1/r)+d}||
 Q(aPu*\phi_{\epsilon_j})
 ||_{L^r(\R^n)}\to 0 \textrm{ as } \epsilon_j\to 0+;
\end{equation} 
\item[(iii)] For every $0<r\leq\infty$,
all functions $a\in C_0^\infty(X)$,
$\phi\in C_0^\infty(\R^n)$,
and every partial differential operator $Q$
of order $d$ with constant coefficients we have
\begin{equation}\label{EQ:cond-gamma-cor}
 \epsilon^{n(1-1/r)+d}||
 Q(aPu*\phi_{\epsilon})
 ||_{L^r(\R^n)}\to 0 \textrm{ as } \epsilon\to 0+.
\end{equation} 
\end{itemize}
\end{theorem}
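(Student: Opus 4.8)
The plan is to reduce the statement to the structure theory of distributions supported at a point and then to run a short scaling argument on the mollified quantity. First I would note that the assumption $Pu=0$ on $X\backslash A$ says precisely that $Pu\in\mathcal{D}'(X)$ is supported at the single point $A=\b{0}$, so by the structure theorem for distributions supported at a point there is a finite representation $Pu=\sum_{\abs\beta\le N}c_\beta\,\partial^\beta\delta_0$ with uniquely determined constants $c_\beta$; when $Pu\neq0$ on $X$ we take $N$ to be its exact order, so that $c_{\beta_0}\neq0$ for some $\abs{\beta_0}=N$. With this in hand, (i)$\Rightarrow$(iii) is immediate, since $Pu=0$ on $X$ forces $aPu=0$ as a distribution on $\R^n$ for every $a\in C_0^\infty(X)$, whence $Q(aPu*\phi_\epsilon)\equiv0$ and \eqref{EQ:cond-gamma-cor} holds trivially for all $r,\phi,Q$. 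The implication (iii)$\Rightarrow$(ii) is also immediate, because the set of admissible tuples in (ii) is non-empty (take $Q$ to be the identity, $a$ any cut-off with $a(0)\neq0$, $\phi$ any non-zero function, and $\epsilon_j=1/j$).

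The content of the theorem is the implication (ii)$\Rightarrow$(i), which I would prove by contraposition. Assume $Pu\neq0$ on $X$ and fix an arbitrary admissible tuple $r,a,\phi,Q$ as in (ii), with $a(0)\neq0$, $\phi\neq0$, and $Q=\sum_{\abs\eta\le d}q_\eta\,\partial^\eta$ non-zero of order $d$. Expanding $a\,\partial^\beta\delta_0$ by the Leibniz rule gives $aPu=\sum_{\abs\delta\le N}\widetilde c_\delta\,\partial^\delta\delta_0$ with top-order coefficients $\widetilde c_\delta=a(0)\,c_\delta$ for $\abs\delta=N$. Using $\partial^\beta\delta_0*\phi_\epsilon=\epsilon^{-n-\abs\beta}(\partial^\beta\phi)(\cdot/\epsilon)$ and applying $Q$ yields $Q(aPu*\phi_\epsilon)(x)=\sum_{\abs\beta\le N}\sum_{\abs\eta\le d}\widetilde c_\beta\,q_\eta\,\epsilon^{-n-\abs\beta-\abs\eta}(\partial^{\beta+\eta}\phi)(x/\epsilon)$. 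Rescaling $x=\epsilon y$ and multiplying by $\epsilon^{n+N+d}$, every term with $\abs\beta<N$ or $\abs\eta<d$ acquires a positive power of $\epsilon$, so $\epsilon^{n+N+d}\,Q(aPu*\phi_\epsilon)(\epsilon y)=\Phi(y)+O(\epsilon)$, where the $O(\epsilon)$ is in $L^r(\R^n)$ for every $0<r\le\infty$, $\Phi:=Q_d(\partial)\bigl(\sum_{\abs\beta=N}\widetilde c_\beta\,\partial^\beta\bigr)\phi$, and $Q_d$ denotes the principal part of $Q$. Combined with the scaling identity $\n{g(\cdot/\epsilon)}_{L^r(\R^n)}=\epsilon^{n/r}\n{g}_{L^r(\R^n)}$ (valid for all $0<r\le\infty$, with $n/\infty=0$), this gives $\epsilon^{\,n(1-1/r)+d}\n{Q(aPu*\phi_\epsilon)}_{L^r(\R^n)}=\epsilon^{-N}\bigl(\n{\Phi}_{L^r(\R^n)}+o(1)\bigr)$ as $\epsilon\to0+$.

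It then remains to check that $\Phi\neq0$. On the Fourier side, $\widehat\Phi(\xi)$ equals, up to a non-zero constant, the product $Q_d(\xi)\bigl(\sum_{\abs\beta=N}c_\beta\,\xi^\beta\bigr)\widehat\phi(\xi)$ of the two non-zero polynomials $Q_d$ and $\sum_{\abs\beta=N}c_\beta\,\xi^\beta$ (non-zero precisely because $Q$ has order $d$ and because $Pu$ has exact order $N$, using $a(0)\neq0$) with the entire function $\widehat\phi$, which is not identically zero since $\phi\neq0$; hence $\widehat\Phi\not\equiv0$ and $\n{\Phi}_{L^r(\R^n)}>0$ for every $0<r\le\infty$. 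Therefore the quantity in \eqref{EQ:cond-gamma-cora} cannot converge to $0$ along any sequence $\epsilon_j\to0+$ (it stays at least $\tfrac12\n{\Phi}_{L^r(\R^n)}$ for small $\epsilon_j$, and tends to $+\infty$ when $N\ge1$), so (ii) fails when $Pu\neq0$ on $X$. This proves (ii)$\Rightarrow$(i) and closes the circle of equivalences.

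I expect the only genuinely delicate point to be the non-vanishing of the leading profile $\Phi$: this is exactly where all three non-degeneracy hypotheses ($a(0)\neq0$, $\phi\neq0$, $Q\neq0$) are used, and it seems cleanest to argue on the Fourier transform side, exploiting that $\widehat\phi$ is real-analytic and not identically zero. Everything else is bookkeeping with scaling exponents, and it is worth remarking that the normalization $n(1-1/r)+d$ in \eqref{EQ:cond-gamma-cora}--\eqref{EQ:cond-gamma-cor} is precisely the one that puts a single Dirac mass (the case $N=0$) on the borderline of the condition.
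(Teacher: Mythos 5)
Your proof is correct and follows essentially the same route as the paper: reduction via the structure theorem to $aPu=S\delta$ with $S$ a constant-coefficient operator, a scaling analysis of $Q(aPu*\phi_\epsilon)$ isolating the top-order profile, and a Fourier-transform argument (non-zero polynomial times $\widehat\phi\not\equiv0$) to show that profile is non-zero. The only differences are organizational — you argue by contraposition and track the role of $a(0)\neq0$ explicitly through the Leibniz expansion, whereas the paper isolates the scaling lower bound as a separate lemma applied to the composite operator $QS$ — so there is nothing substantive to add.
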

We remark that $Q(aPu*\phi_{\epsilon})$ always tends to
$\widehat \phi(0)Q(aPu)$ as $\epsilon\to 0+$ in 
the distributional sense, so that the expression
$$ \epsilon^{n(1-1/r)+d}
 Q(aPu*\phi_{\epsilon})
$$
always tends to zero as $\epsilon\to 0+$
in the the distributional sense when $d>-n(1-1/r)$, 
in particular when $1<r\leq\infty$.
Property \eqref{EQ:cond-gamma-cora}
or \eqref{EQ:cond-gamma-cor} requires that in order for the
weak extension property to hold,
this convergence should be in a stronger sense. See also Remark
\ref{rem:weakconv} for some special case.
\begin{rem}\label{nonsmooth}
We can assume that the operator $P$ has non-smooth coefficients
if we restrict the class of distributions for $u$.
For example, if the coefficients $\b{a_\alpha}_{|\alpha|\leq m}$ in 
\eqref{EQ:P-form} satisfy only $a_\alpha\in C^{|\alpha|+k}$ but
the distribution $u$ is of order $k$ with some 
non-negative integer $k$, then 
$\langle Pu,\phi\rangle$ is still well-defined
and Theorem \ref{cor:all-U-Lp} is still valid in this setting.
\end{rem}

\begin{proof}[Proof of Theorem \ref{cor:all-U-Lp}.]
Implications (i) $\Longrightarrow$ (iii) and (iii) 
$\Longrightarrow$ (ii) are straightforward,
while the implication (ii) $\Longrightarrow$ (i)
is based on the structure theorem for distributions.
Indeed, assumption
$Pu=0$ on $X\backslash A$
implies that the support of 
distribution $Pu$ on $X$ is contained in the origin $A=\{0\}$, so
we get that
$Pu$ is a finite sum of the derivatives of Dirac's delta function.
By multiplying a cut-off function $a\in C^\infty_0(X)$,
we have the expression
\begin{equation}\label{str-thm}
aPu=S\delta\text{ on $\R^n$},
\end{equation}
for some partial differential operator $S$ with
constant coefficients.
This conclusion is also true in the setting of Remark \ref{nonsmooth}.
All our task now is to show that $S=0$, 
so that we can conclude that $Pu=0$ on $X$.

We consider the convolution of both hand sides of
\eqref{str-thm} with test functions, and obtain the equality
\begin{equation}\label{str-thm:conv}
aPu*\phi_\epsilon=S\phi_\epsilon,
\end{equation}
where $\phi_\epsilon$ is defined as in \eqref{molifier}
for $\phi\in C^\infty_0(\R^n)$.
Implication (ii) $\Longrightarrow$ (i)
readily follows from the following lemma:

\begin{lemma}\label{Lemma:2}
Let $0< r\leq\infty$.
Let $S$ be a non-zero partial differential operator of
order $\kappa$ with constant coefficients, and assume that
its top $\kappa^{th}$ order part is non-zero.
Let $\phi\in C_0^\infty(\R^n)$ be a non-zero function.
Then we have
\begin{equation}\label{EQ:conv4s}
C\epsilon^{-\kappa-n(1-1/r)}\leq
||
 S\phi_{\epsilon}
 ||_{L^r(\R^n)}
 +O(\epsilon^{-\kappa-n(1-1/r)+1})
\end{equation}
for $0<\epsilon\leq1$, where $C>0$ is a constant
independent of $\epsilon$.
\end{lemma}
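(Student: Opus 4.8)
The plan is to rescale the quantity $\|S\phi_\epsilon\|_{L^r}$ so that the whole estimate collapses to the single fact that the principal part of $S$ does not annihilate $\phi$. First I would write $S=\sum_{|\gamma|\le\kappa}c_\gamma\partial^\gamma$ and isolate its top order part $S_\kappa=\sum_{|\gamma|=\kappa}c_\gamma\partial^\gamma$, which is non-zero by hypothesis. Using $\partial^\gamma\phi_\epsilon(x)=\epsilon^{-n-|\gamma|}(\partial^\gamma\phi)(x/\epsilon)$ and factoring out $\epsilon^{-n-\kappa}$, one gets
\[
S\phi_\epsilon(x)=\epsilon^{-n-\kappa}\bigl(\psi(x/\epsilon)+\epsilon\,h_\epsilon(x/\epsilon)\bigr),\qquad
\psi:=S_\kappa\phi,\quad h_\epsilon:=\sum_{|\gamma|<\kappa}c_\gamma\,\epsilon^{\kappa-|\gamma|-1}\partial^\gamma\phi,
\]
where the exponents $\kappa-|\gamma|-1$ are all $\ge 0$, so that for $0<\epsilon\le 1$ the functions $h_\epsilon$ stay in a fixed bounded subset of $L^r(\R^n)$, say $\|h_\epsilon\|_{L^r}\le M<\infty$. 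A change of variables in the $L^r$ norm then produces the exact identity $\|S\phi_\epsilon\|_{L^r(\R^n)}=\epsilon^{-\kappa-n(1-1/r)}\|\psi+\epsilon h_\epsilon\|_{L^r(\R^n)}$ (the change of variables contributing $\epsilon^{n/r}$, read as $\epsilon^0$ when $r=\infty$).

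The step I expect to be the main obstacle is to show that $\psi=S_\kappa\phi$ is \emph{not} identically zero --- this is precisely where the hypothesis on the top order part enters. I would verify it on the Fourier side: $\widehat\psi(\xi)=S_\kappa(i\xi)\,\widehat\phi(\xi)$, where $S_\kappa(i\xi)$ is a non-zero polynomial and $\widehat\phi$, being the Fourier transform of a non-zero compactly supported function, extends to a non-zero entire function by Paley--Wiener; hence the product is not identically zero, and neither is $\psi$. Since $\psi$ is moreover smooth and compactly supported, $0<\|\psi\|_{L^r(\R^n)}<\infty$ for every $0<r\le\infty$.

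It then remains to bound $\|\psi+\epsilon h_\epsilon\|_{L^r}$ from below. For $1\le r\le\infty$ the triangle inequality gives $\|\psi+\epsilon h_\epsilon\|_{L^r}\ge\|\psi\|_{L^r}-\epsilon M$; multiplying by $\epsilon^{-\kappa-n(1-1/r)}$ and rearranging then yields \eqref{EQ:conv4s} with $C=\|\psi\|_{L^r}>0$ and error term $M\epsilon^{-\kappa-n(1-1/r)+1}$. For $0<r<1$ I would instead use the $r$-subadditivity $\|f+g\|_{L^r}^r\le\|f\|_{L^r}^r+\|g\|_{L^r}^r$ to obtain $\|\psi+\epsilon h_\epsilon\|_{L^r}\ge(\|\psi\|_{L^r}^r-M^r\epsilon^r)^{1/r}$, which stays above a fixed positive constant for $\epsilon$ in some interval $(0,\epsilon_0]$, while for $\epsilon_0\le\epsilon\le 1$ the main term $C\epsilon^{-\kappa-n(1-1/r)}$ is harmlessly absorbed into the $O(\epsilon^{-\kappa-n(1-1/r)+1})$ error after enlarging its constant (since there $\epsilon^{-\kappa-n(1-1/r)+1}\ge\epsilon_0\,\epsilon^{-\kappa-n(1-1/r)}$). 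Everything besides the non-vanishing of $S_\kappa\phi$ is just the scaling bookkeeping described above.
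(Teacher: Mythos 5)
Your proposal is correct and follows essentially the same route as the paper: the same splitting of $S\phi_\epsilon$ into the rescaled top-order term $\epsilon^{-\kappa-n}(S_\kappa\phi)(x/\epsilon)$ plus an $O(\epsilon)$ lower-order remainder, the same scaling of the $L^r$ quantity, and the same Fourier-side argument that $S_\kappa\phi\not\equiv 0$ (the paper uses the nowhere-dense zero set of the homogeneous polynomial rather than Paley--Wiener, an immaterial difference). Your explicit treatment of the quasi-norm case $0<r<1$ via $r$-subadditivity is a welcome extra precision that the paper glosses over, but it does not change the argument.
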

Let us first show that Lemma \ref{Lemma:2} implies that $S=0$.
In fact, on account of the expression \eqref{str-thm:conv},
condition (ii) of Theorem \ref{cor:all-U-Lp} says that
\[
 \epsilon_j^{n(1-1/r)+d}||
 QS\phi_{\epsilon_j}
 ||_{L^r(\R^n)}\to 0,
\] 
while we obtain from Lemma \ref{Lemma:2} that
\[ 
0<C\leq\epsilon_j^{n(1-1/r)+d+\kappa}
||
 QS\phi_{\epsilon_j}
 ||_{L^r(\R^n)}
 +O(\epsilon_j)
\]
as $\epsilon_j\to 0+$, where $d$ and $\kappa$ are the orders of 
$Q$ and $S$, respectively.
It is a contradiction if $\kappa\geq0$, hence we must have $S=0$.
\end{proof}

\begin{proof}[Proof of Lemma \ref{Lemma:2}.]
Since the order of the partial differential operator $S$ is
$\kappa$, we can write it as $S=\sum_{|\alpha|\leq \kappa} c_\alpha
\partial^\alpha.$
Denoting its homogeneous top order part by
$S_\kappa=\sum_{|\alpha|=\kappa} c_\alpha \partial^\alpha$,
we have
\begin{align*}
S\phi_\epsilon(x)
&=\epsilon^{-n}
\sum_{|\alpha|\leq \kappa} \epsilon^{-|\alpha|} c_\alpha
(\partial^\alpha\phi)(x/\epsilon)
\\
&=\epsilon^{-\kappa-n}
(S_\kappa\phi)(x/\epsilon)+F_\epsilon(x/\epsilon),
\end{align*}
where
$$
F_\epsilon(x)=\sum_{|\alpha|\leq \kappa-1}
\epsilon^{\kappa-|\alpha|}
c_\alpha (\partial^\alpha \phi)(x)\quad\text{and}\quad
||F_\epsilon(x)||_{L^r}=O(\epsilon).
$$
Taking the $L^r$ quantities of both sides, we have
\[
\epsilon^{-\kappa-n(1-1/r)}||S_\kappa\phi||_{L^r}
\leq
C||
 S\phi_{\epsilon}
 ||_{L^r(\R^n)}
 +O(\epsilon^{-\kappa-n(1-1/r)+1}),
\]
Then the proof is complete if we show
that $||S_\kappa\phi||_{L^r}\not=0$. 
Indeed, if $S_\kappa\phi\equiv 0$,
then taking its Fourier transform, we would get
$S_\kappa(\xi)\widehat{\phi}(\xi)=0$ for all $\xi\in\Rn$.
Since $S_\kappa(\xi)$ is a homogeneous polynomial of degree $\kappa$
and $S_\kappa\not\equiv0$,
its zero set is nowhere dense in $\Rn$. Since
$\widehat{\phi}$ is smooth, we get that $\widehat{\phi}(\xi)=0$
for all $\xi$, which contradicts the assumption that
$\phi\not\equiv 0$.
\end{proof}

%\newpage
\section{$L^p$-extension problem}
\label{Lpext}
In this section, we apply Theorem \ref{cor:all-U-Lp}
for the distribution $u\in L^p(X)$.
Since it is of order $0$,
we may assume that $P$ in \eqref{EQ:P-form} is of the weaker form
\begin{equation}\label{EQ:P-formw}
 P=\sum_{|\alpha|\leq m} a_\alpha(x) \partial^\alpha,
\quad \text{$a_\alpha\in C^{|\alpha|}(X)$}.
\end{equation}
as is mentioned in Remark \ref{nonsmooth}.
Using the criterion of Theorem \ref{cor:all-U-Lp},
we can now derive good estimates for the behaviour of the
quantity
 $||Q(aPu*\phi_{\epsilon})||_{L^r(\R^n)}$ as $\epsilon\to 0+$,
which appeared in Theorem \ref{cor:all-U-Lp}.

\begin{proposition}\label{cor:all-U-Lp2}
Let $X\subset\R^n$ be an open set, let
$A=\b{0}$ be the origin, 
and let
$P$ be a partial differential operator on $X$ of
order $m$.
Let $a\in C_0^\infty(X)$, $\phi\in C_0^\infty(\R^n)$,
and let $Q$ be a partial differential operator
of order $d$ with constant coefficients.
Let $1\leq p\leq r\leq\infty$ and suppose $u\in L^p(X)$.
Then we have 
\begin{equation}\label{O-beh}
 ||
 Q(aPu*\phi_{\epsilon})
 ||_{L^r(\R^n)}
=O(\epsilon^{-d-m-n(1/p-1/r)}) \textrm{ as } \epsilon\to 0+.
\end{equation}
Furthermore, if $p\neq\infty$ and $Pu=0$ on $X\backslash A$,
then we have
\begin{equation}\label{o-beh}
 ||
  Q(aPu*\phi_{\epsilon})
 ||_{L^r(\R^n)}
=o(\epsilon^{-d-m-n(1/p-1/r)}) \textrm{ as } \epsilon\to 0+.
\end{equation}
\end{proposition}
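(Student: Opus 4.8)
The plan is to estimate the convolution $aPu*\phi_\epsilon$ directly by expanding $P$ and moving derivatives onto the mollifier. Since $u\in L^p(X)$, write $aPu = \sum_{|\alpha|\le m}\partial^\alpha(b_\alpha u)$ for suitable $b_\alpha\in C_0^\infty(X)$ (one distributes the coefficients $a_\alpha$ and the cutoff $a$ via the Leibniz rule; this is legitimate because $a_\alpha\in C^{|\alpha|}$ and $u$ has order $0$, as in Remark \ref{nonsmooth}). Then for any constant-coefficient operator $Q$ of order $d$, the quantity $Q(aPu*\phi_\epsilon)$ is a finite sum of terms of the form $(b_\alpha u)*(\partial^\beta\phi)_\epsilon$ with $|\beta|=|\alpha|+d\le m+d$, up to the scaling factor coming from differentiating $\phi_\epsilon$. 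Concretely, $\partial^\gamma(\phi_\epsilon) = \epsilon^{-|\gamma|}(\partial^\gamma\phi)_\epsilon$, so each such term carries a factor $\epsilon^{-|\alpha|-d}$, with the worst (largest) exponent $\epsilon^{-m-d}$ realized on the top-order part.

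The main step is then a Young-type inequality. For $1\le p\le r\le\infty$, one has $\|f*\psi_\epsilon\|_{L^r}\le \|f\|_{L^p}\|\psi_\epsilon\|_{L^q}$ where $1+1/r = 1/p+1/q$, and $\|\psi_\epsilon\|_{L^q} = \epsilon^{-n(1-1/q)}\|\psi\|_{L^q} = \epsilon^{-n(1/p-1/r)}\|\psi\|_{L^q}$ by the scaling \eqref{molifier}. Applying this with $f = b_\alpha u\in L^p$ and $\psi = \partial^\beta\phi\in C_0^\infty$, each term is bounded by $C\,\epsilon^{-|\alpha|-d}\cdot\epsilon^{-n(1/p-1/r)}$, and summing over $|\alpha|\le m$ gives the bound $O(\epsilon^{-m-d-n(1/p-1/r)})$ as $\epsilon\to 0+$, which is \eqref{O-beh}. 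I would be mildly careful about the case $r<\infty$ versus $r=\infty$ (Young's inequality holds throughout this range) and about absorbing all lower-order contributions into the leading power of $\epsilon$.

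For the improvement \eqref{o-beh} when $p\ne\infty$ and $Pu=0$ on $X\backslash A$, the structure theorem gives $aPu = S\delta$ on $\R^n$ for some constant-coefficient $S$ (as in \eqref{str-thm}); hence $aPu*\phi_\epsilon = S\phi_\epsilon$, which is a genuine $L^r$ function with $\|QS\phi_\epsilon\|_{L^r} = O(\epsilon^{-\kappa-d-n(1-1/r)})$ by a computation like the one in Lemma \ref{Lemma:2}, where $\kappa$ is the order of $S$. So it suffices to show $\kappa + n(1-1/r) < m + n(1/p-1/r)$, i.e.\ $\kappa < m + n(1-1/p) = m + n/p'$. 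This is where I expect the only real work: one must show that the order $\kappa$ of the delta-derivative part is strictly smaller than $m+n/p'$. The idea is a duality/testing argument: pairing $aPu = S\delta$ against a test function $\chi\in C_0^\infty$ equal to $1$ near $0$ shows $\langle S\delta,\chi\rangle$ picks out coefficients of $S$, and more refined testing against rescaled bumps $\chi(\cdot/\lambda)$ as $\lambda\to 0$ forces, via $u\in L^p$, the estimate $|\langle aPu,\chi(\cdot/\lambda)\rangle|\lesssim \lambda^{-m}\|u\|_{L^p}\lambda^{n/p'}$, while the left side grows like $\lambda^{-\kappa}$ if the order-$\kappa$ part of $S$ is nonzero; comparing exponents as $\lambda\to 0$ yields $\kappa\le m - n/p' < m+n/p'$ — in fact a sharper bound than needed, but in any case strict. (Alternatively, one can invoke the scaling argument already packaged in Lemma \ref{Lemma:2} together with \eqref{O-beh}: if $\kappa$ were too large, \eqref{O-beh} applied with $r=p$ would contradict the lower bound of Lemma \ref{Lemma:2}.) Once the strict inequality of exponents is in hand, $O(\epsilon^{-\kappa-d-n(1-1/r)}) = o(\epsilon^{-m-d-n(1/p-1/r)})$, giving \eqref{o-beh}.
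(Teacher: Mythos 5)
Your proof of \eqref{O-beh} is essentially the paper's argument: you put $aPu$ in divergence form and apply Young's inequality with the scaling $\|\psi_\epsilon\|_{L^q}=\epsilon^{-n(1/p-1/r)}\|\psi\|_{L^q}$, whereas the paper keeps the $x$-dependent adjoint kernel $R_x\phi_\epsilon$ and takes a supremum in $x$ (Lemma \ref{Lemma:1}); these are interchangeable.

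The second part contains a genuine gap, flagged by a sign error. From $\|QS\phi_\epsilon\|_{L^r}=O(\epsilon^{-\kappa-d-n(1-1/r)})$, the exponent you need is $\kappa+n(1-1/r)<m+n(1/p-1/r)$, which simplifies to $\kappa< m-n(1-1/p)=m-n/p'$, not $\kappa<m+n/p'$ as you wrote. Your testing argument (or, equivalently, combining \eqref{O-beh} at $r=p$ with the lower bound of Lemma \ref{Lemma:2}, which is exactly the paper's Lemma \ref{Lemma:basic}) yields only the non-strict inequality $\kappa\le m-n/p'$: comparing $C\lambda^{-\kappa}\lesssim\lambda^{-m+n/p'}$ as $\lambda\to0$ cannot rule out the borderline case $\kappa=m-n/p'$. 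So this bound is not ``sharper than needed'' --- it falls exactly one case short, and in that case $\|QS\phi_\epsilon\|_{L^r}\asymp\epsilon^{-d-m-n(1/p-1/r)}$, which is $O$ but not $o$. The strict inequality is precisely where the hypothesis $p\ne\infty$ must enter (for $p=\infty$ the conclusion is false, e.g.\ the Heaviside example with $m=n=1$, $\kappa=0=m-n(1-1/p)$). The paper excludes the borderline case by replacing $u$ with $u_R=\chi(\cdot/R)u$, which still satisfies $Pu_R=S\delta$ near the origin while $\|u_R\|_{L^p}\le\|u\|_{L^p(|x|\le R)}\to0$ as $R\to0$ when $p<\infty$; since the constant in estimate \eqref{EQ:conv41} is proportional to $\|u_R\|_{L^p}$, equality $\kappa=m-n(1-1/p)$ leads to a contradiction. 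Your testing argument could be repaired the same way (the support of $\chi(\cdot/\lambda)$ shrinks, so the relevant $L^p$ norm of $u$ tends to zero), but as written the assertion ``but in any case strict'' is unjustified and the proof of \eqref{o-beh} is incomplete.
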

The proof of Proposition \ref{cor:all-U-Lp2} will be given later in this
section, but now let us state some results obtained from it
as corollaries of Theorem \ref{cor:all-U-Lp}.
First we note that Bochner's result (Theorem
\ref{Th-A} in Introduction) or even its generalised version
stated below is a straightforward consequence of
Theorem \ref{cor:all-U-Lp} and Proposition \ref{cor:all-U-Lp2}
with $r=p$.

\begin{cor}\label{cor:FS-stronger}
Let $1\leq p\leq\infty$, let $X\subset\Rn$ be an open set,
let $A=\{0\}$ be the origin,
and let
$P$ be a partial differential operator on $X$ 
as in \eqref{EQ:P-formw} of
order $m<n(1-1/p)$.
Suppose that $u\in L^p(X)$ satisfies
$Pu=0 \textrm{ on } X\backslash A$.
Then $Pu=0$ on $X$.
Moreover, if $1\leq p<\infty$, the conclusion
holds also for $m\leq n(1-1/p)$.
\end{cor}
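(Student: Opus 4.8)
The plan is to deduce the corollary from the equivalence (i)$\Leftrightarrow$(ii) in Theorem \ref{cor:all-U-Lp}, with Proposition \ref{cor:all-U-Lp2} supplying the decay needed to verify (ii). First I would fix any non-zero cut-off $a\in C_0^\infty(X)$ which does not vanish at the origin, any non-zero $\phi\in C_0^\infty(\R^n)$, take $Q=\mathrm{Id}$ (so $d=0$), and choose $r=p$ in the criterion; then condition (ii) amounts to showing
\[
 \epsilon^{n(1-1/p)}\,\|aPu*\phi_\epsilon\|_{L^p(\R^n)}\to 0
\]
as $\epsilon\to 0+$ (along a sequence, in particular).

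In the subcritical case $m<n(1-1/p)$ I would apply \eqref{O-beh} of Proposition \ref{cor:all-U-Lp2} with $r=p$, $d=0$, which gives $\|aPu*\phi_\epsilon\|_{L^p}=O(\epsilon^{-m})$; multiplying by $\epsilon^{n(1-1/p)}$ produces $O(\epsilon^{\,n(1-1/p)-m})$, and since the exponent $n(1-1/p)-m$ is strictly positive this tends to $0$. Thus (ii) of Theorem \ref{cor:all-U-Lp} holds and the theorem yields $Pu=0$ on $X$. (This argument is valid also for $p=\infty$, where the hypothesis reads $m<n$.) For the critical case $1\le p<\infty$, $m=n(1-1/p)$, the $O$-bound only gives boundedness of the quantity above, so instead I would invoke the sharper estimate \eqref{o-beh}, which applies precisely because $p\neq\infty$ and because the hypothesis $Pu=0$ on $X\setminus A$ is in force; it gives $\|aPu*\phi_\epsilon\|_{L^p}=o(\epsilon^{-m})$, hence $\epsilon^{n(1-1/p)}\|aPu*\phi_\epsilon\|_{L^p}=o(1)\to 0$, and again (ii) is verified and Theorem \ref{cor:all-U-Lp} gives $Pu=0$ on $X$.

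There is no real obstacle here beyond Proposition \ref{cor:all-U-Lp2} itself, whose proof is postponed to later in the section; once that is granted the corollary is immediate. The one point worth emphasising is that the critical case genuinely requires the $o$-improvement over the $O$-estimate, and that this improvement is exactly where both the finiteness of $p$ and the equation $Pu=0$ on $X\setminus A$ are used --- everything else is just matching the exponents in the criterion of Theorem \ref{cor:all-U-Lp} with $r=p$ and $Q=\mathrm{Id}$.
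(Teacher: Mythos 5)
Your proof is correct and is exactly the route the paper intends: it derives the corollary from Theorem \ref{cor:all-U-Lp} and Proposition \ref{cor:all-U-Lp2} with $r=p$, using the $O$-estimate \eqref{O-beh} (with $Q=\mathrm{Id}$, $d=0$) in the subcritical case and the $o$-improvement \eqref{o-beh} precisely in the critical case $m=n(1-1/p)$, $p<\infty$. The paper states this as a "straightforward consequence" without writing out the exponent bookkeeping, which your argument supplies correctly.
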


While Theorem \ref{cor:all-U-Lp} gives the necessary and sufficient
condition also in the $L^p$--setting, 
the combination of Theorem \ref{cor:all-U-Lp} and
Proposition \ref{cor:all-U-Lp2}
can also give sufficient conditions for the $L^p$-extension property
for the critical case $m=n(1-1/p)$ with $p=\infty$,
or even for the supercritical case $m>n(1-1/p)$
for general $1\leq p\leq\infty$.
Indeed we have the following result:

\begin{cor}\label{Prop:harm}
Let $1\leq p\leq\infty$,
let $X\subset\Rn$ be an open set, 
let $A=\{0\}$ be the origin, and let $P$ be 
a partial differential operator of
order $m<n(1-1/p)+1$ with smooth coefficients 
as in \eqref{EQ:P-form}.
Suppose that $u\in L^p(X)$ satisfies
$Pu=0 \textrm{ on } X\backslash A$.
Let $1\leq q\leq\infty$, let $Q$ be a non-zero partial differential
operator of order $d>m-n(1-1/q)$ with constant coefficients, and
also suppose that $Qu\in L^q$.
Then $Pu=0$ on $X$.
Moreover, if $1\leq p<\infty$, the conclusion
holds also for $m\leq n(1-1/p)+1$.
If $1\leq q<\infty$, the conclusion
holds also for $d\geq m-n(1-1/q)$.
\end{cor}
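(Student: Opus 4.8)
The plan is to verify condition (ii) of Theorem \ref{cor:all-U-Lp}, taking the auxiliary operator there to be the given $Q$, and to bring in the extra hypothesis $Qu\in L^q$ through a commutator identity.

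First I would dispose of two degenerate cases. If $m=0$, then $Pu=a_0u$ is an $L^1_{\rm loc}$ function that vanishes almost everywhere off $A$, hence almost everywhere on $X$, so $Pu=0$ on $X$. If $d=0$, then $Q$ is multiplication by a non-zero constant, so $u\in L^q$ with $m<n(1-1/q)$, and the claim follows at once from Corollary \ref{cor:FS-stronger}. So assume $m,d\ge1$. Since the problem is local and $L^\infty_{\rm loc}\subset L^t_{\rm loc}$ for each finite $t$, whenever $p=\infty$ (resp.\ $q=\infty$) I may replace it by a sufficiently large finite exponent while keeping $m<n(1-1/p)+1$ (resp.\ $d>m-n(1-1/q)$); note that equality in one of the two ``moreover'' hypotheses forces the corresponding index to be finite already. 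Thus I may assume $1\le p,q<\infty$. Now fix $a\in C_0^\infty(X)$ with $a\equiv1$ near $A$, a non-zero $\phi\in C_0^\infty(\R^n)$, and set $r:=\max(p,q)<\infty$.

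The heart of the matter is the elementary identity: since $Q=\sum_{|\beta|\le d}c_\beta\partial^\beta$ has constant coefficients, Leibniz's rule applied to $aPu=\sum_{|\alpha|\le m}(aa_\alpha)\partial^\alpha u$ gives, as distributions on $\R^n$,
\[
Q(aPu)=aP(Qu)+Ru,\qquad R=\sum_{|\nu|\le m+d-1}e_\nu\partial^\nu,\quad e_\nu\in C_0^\infty(\R^n),
\]
because the terms of order $m+d$ in $Q(aPu)$ and in $aP(Qu)$ coincide and cancel, so the remainder $R$ has order at most $m+d-1$. Convolving with $\phi_\epsilon$ and using that $Q$ commutes with convolution,
\[
Q(aPu*\phi_\epsilon)=\bigl(aP(Qu)\bigr)*\phi_\epsilon+(Ru)*\phi_\epsilon .
\]
For the first term I apply estimate \eqref{O-beh} of Proposition \ref{cor:all-U-Lp2} to the distribution $Qu\in L^q(X)$, with the operator $P$ and the trivial test operator, obtaining $\|(aP(Qu))*\phi_\epsilon\|_{L^r}=O(\epsilon^{-m-n(1/q-1/r)})$. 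For the second term I integrate by parts, expanding $(Ru)*\phi_\epsilon$ into a finite sum of convolutions $\bigl((\partial^{\nu-\mu}e_\nu)u\bigr)*\partial^\mu\phi_\epsilon$ with $\mu\le\nu$, and then using $u\in L^p$ and Young's inequality to get $\|(Ru)*\phi_\epsilon\|_{L^r}=O(\epsilon^{-(m+d-1)-n(1/p-1/r)})$. Multiplying through by $\epsilon^{n(1-1/r)+d}$, the two contributions to $\epsilon^{n(1-1/r)+d}\|Q(aPu*\phi_\epsilon)\|_{L^r}$ become $O(\epsilon^{\,n(1-1/q)+d-m})$ and $O(\epsilon^{\,n(1-1/p)+1-m})$, and both exponents are $\ge0$ by the hypotheses $d\ge m-n(1-1/q)$ and $m\le n(1-1/p)+1$. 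This already settles the cases of strict inequality.

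It remains to turn the two $O$'s into $o$'s in the endpoint situations where an exponent is exactly $0$ (which, as noted, can only occur for a finite value of the relevant index). In each such case the most singular convolution occurring above involves $\partial^\gamma\phi_\epsilon$ with $|\gamma|\ge1$, i.e.\ a rescaling in the sense of \eqref{molifier} of a fixed $L^1$ function of integral zero; since $g*\rho_\epsilon\to0$ in $L^t$ for $g\in L^t$, $t<\infty$ and $\int\rho=0$, interpolating this against the crude bound $\|g*\rho_\epsilon\|_{L^\infty}\lesssim\epsilon^{-n/t}$ (legitimate because $r<\infty$) upgrades the estimate to $o$; this is precisely the mechanism behind the $o$-statement \eqref{o-beh} of Proposition \ref{cor:all-U-Lp2}. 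Hence $\epsilon^{n(1-1/r)+d}\|Q(aPu*\phi_\epsilon)\|_{L^r}\to0$ as $\epsilon\to0+$, so \eqref{EQ:cond-gamma-cora} holds with these $r,a,\phi$, the operator $Q$, and any sequence $\epsilon_j\to0+$; by Theorem \ref{cor:all-U-Lp} we conclude $Pu=0$ on $X$.

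The step I expect to be the real obstacle is the first one: seeing that $Qu\in L^q$ has to be injected through $Q(aPu)=aP(Qu)+Ru$. The commutator remainder $R$ loses exactly one order, and this single lost derivative is what makes the numerology close --- it converts the $d$ derivatives of integrability contained in $Qu$ into precisely the one-derivative improvement $m\le n(1-1/p)+1$ beyond Bochner's range $m\le n(1-1/p)$ (which is also why the bound is sharp, cf.\ \eqref{EQ:John}). Everything after that is routine: the two norm estimates are applications of Proposition \ref{cor:all-U-Lp2} and of standard mollifier bounds.
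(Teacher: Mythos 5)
Your proposal is correct, and its core is the same as the paper's: you verify condition (ii)/(iii) of Theorem \ref{cor:all-U-Lp} for the given $Q$ by splitting $Q(aPu)$ into $aP(Qu)$ plus a commutator remainder $R=[Q,aP]$ of order $m+d-1$, estimate the first piece via the hypothesis $Qu\in L^q$ and the second via $u\in L^p$, and choose $r=\max(p,q)$; this is exactly the paper's decomposition $QaPu=\tilde aRu+aPv$ with $v=Qu$, and your two $O$-exponents agree with the paper's. Where you genuinely diverge is the treatment of the equality (endpoint) cases. The paper simply invokes the $o$-refinement \eqref{o-beh} of Proposition \ref{cor:all-U-Lp2} for both terms; as stated, \eqref{o-beh} requires the relevant operator applied to the relevant function to vanish off $A$ (it is proved via the structure theorem and Lemma \ref{Lemma:basic}), and neither $Ru=0$ nor $P(Qu)=0$ on $X\backslash A$ is verified there. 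You instead prove the needed $o$-gain directly: after reducing to $m,d\geq 1$ and to finite $p,q$ (a reduction you justify correctly, noting that equality can only occur at a finite index), the top-order convolutions involve $\partial^\gamma\phi_\epsilon$ with $|\gamma|\geq 1$, i.e.\ rescalings of mean-zero kernels, and the combination of $\|g*\rho_\epsilon\|_{L^t}\to 0$ with the crude $L^\infty$ bound and interpolation yields exactly the extra $o(1)$. This is self-contained and in fact more robust than the paper's citation of \eqref{o-beh}; your parenthetical that this mean-zero mechanism is "precisely" what underlies \eqref{o-beh} is not accurate as a description of the paper's proof (which goes through Lemma \ref{Lemma:basic}), but that does not affect the validity of your argument. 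The preliminary disposal of $m=0$ and $d=0$ is also correct and is needed for the mean-zero step to apply.
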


\begin{rem}\label{remark2}
As a special case of the assumption $Qu\in L^q$
in Corollary \ref{Prop:harm}, we may assume that $u$
belongs to the Sobolev space $W^{q,d}$.
More strongly, we may just assume $Qu=0$ with a non-zero partial differential
operator $Q$.
In this case the condition $d>m-n(1-1/q)$ is automatically satisfied if
we take large $d$ as the order of $Q$.
\end{rem}

\begin{proof}[Proof of Corollary \ref{Prop:harm}.]
Let us show that property \eqref{EQ:cond-gamma-cor}
in Theorem \ref{cor:all-U-Lp} holds
for $Q$ which appeared in the assumption.
Indeed, we have 
$QaPu=\tilde a Ru +aPv$, where $\tilde a\in C^\infty_0(X)$
is identically one on $\supp a$,
$R=[Q,\,aP]$ is a partial differential operator of order $m+d-1$,
and $v=Qu\in L^q$. Hence we have
$Q(aPu*\phi_\epsilon)=\tilde a Ru*\phi_\epsilon+a Pv*\phi_\epsilon$.
Let $r$ be such that $p\leq r$ and $q\leq r$.
Then from Proposition \ref{cor:all-U-Lp2} with $d=0$, we obtain
\[
 ||
 \tilde aRu*\phi_\epsilon
 ||_{L^r(\R^n)}
=O(\epsilon^{-m-d+1-n(1/p-1/r)}) \textrm{ as } \epsilon\to 0+,
\]
or small $o(\epsilon^{-m-d+1-n(1/p-1/r)}))$ if $1\leq p<\infty$, and 
\[
 ||
  aPv*\phi_\epsilon
 ||_{L^r(\R^n)}
=O(\epsilon^{-m-n(1/q-1/r)}) \textrm{ as } \epsilon\to 0+,
\]
or small $o(\epsilon^{-m-n(1/q-1/r)})$ if $1\leq q<\infty$.
Then we have
\[
\epsilon^{n(1-1/r)+d}||Q(aPu*\phi_\epsilon)||_{L^r(\R^n)}
=O(\epsilon^{-m+1+n(1-1/p)})+O(\epsilon^{d-m+n(1-1/q)})
\]
or small $o(\cdot)$'s, respectively,
and the the conclusion is straightforward.
\end{proof}

Now we prove Proposition \ref{cor:all-U-Lp2}.
We note 
\begin{align*}
 (aPu*\phi_\epsilon)(x) & = \int\sum_{|\alpha|\leq m}(aa_\alpha)(x-y)
   (\partial^\alpha u)(x-y) \phi_\epsilon(y) dy \\
   & = \sum_{|\alpha|\leq m} \int \left[ (-\partial)^\alpha u(x-\cdot)
   \right](y) (aa_\alpha)(x-y) \phi_\epsilon(y) dy \\
   & = \int u(x-y) \sum_{|\alpha|\leq m} \partial_y^\alpha
   \left[ (aa_\alpha)(x-y) \phi_\epsilon(y) \right] dy, 
\end{align*}
where we used the identity
$(\partial^\alpha u)(x-y)=((-\partial)^\alpha(u(x-\cdot)))(y).$
Hence we obtain
\begin{equation}\label{EQ:P-adjoint}
  (aPu*\phi_\epsilon)(x) = (u*R_x\phi_\epsilon)(x),
\end{equation} 
where 
\begin{equation}\label{EQ:P-adj-def}
 (R_x\phi_\epsilon)(y) = \sum_{|\alpha|\leq m} \partial_y^\alpha
 \left[(aa_\alpha)(x-y)\phi_\epsilon(y)\right].
\end{equation} 
These formulae can be easily extended in the sense of
distributions for $u\in L^p(X)$ by extending by zero outside of $X\subset\R^n$.
%We note that if $P$ has
%constant coefficients, then $P_x^t=P$.
Estimate \eqref{O-beh} is a consequence of the following lemma
if we note that $Q(aPu*\phi_\epsilon)=aPu*Q\phi_\epsilon$ and
$Q\phi_\epsilon$ is a linear combination of the functions of the form
$\epsilon^{-|\alpha|}(\partial^{\alpha}\phi)_\epsilon$
($|\alpha|\leq d$):

\begin{lemma}\label{Lemma:1}
Let $X\subset\Rn$ be an open set,
and let $P$ be a partial differential operator of
order $m$.
Let $a\in C^\infty_0(X)$ and
$\phi\in C_0^\infty(\R^n)$. 
Let $1\leq p\leq r\leq\infty$ and suppose $u\in L^p(X)$.
Then we have the estimate
\begin{equation}\label{EQ:conv4}
||
 aPu*\phi_{\epsilon}
 ||_{L^r(\R^n)}
 \leq C\epsilon^{-m-n(1/p-1/r)}||u||_{L^p(X)}
\end{equation}
for $0<\epsilon\leq1$, where $C>0$ is a constant
independent of $u$ and $\epsilon$.
\end{lemma}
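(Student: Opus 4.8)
The plan is to reduce the estimate, via the identity \eqref{EQ:P-adjoint}--\eqref{EQ:P-adj-def} established just above, to a finite collection of genuine translation-invariant convolutions, to which Young's inequality applies directly. First I would expand the kernel $(R_x\phi_\epsilon)(y)$ from \eqref{EQ:P-adj-def} by the Leibniz rule together with the chain rule $\partial_y^\beta\br{(aa_\alpha)(x-y)}=(-1)^{|\beta|}(\partial^\beta(aa_\alpha))(x-y)$, obtaining
\[
 \partial_y^\alpha\br{(aa_\alpha)(x-y)\,\phi_\epsilon(y)}
 =\sum_{\beta\leq\alpha}\binom{\alpha}{\beta}(-1)^{|\beta|}
 \p{\partial^\beta(aa_\alpha)}(x-y)\,\partial_y^{\alpha-\beta}\phi_\epsilon(y),
\]
and then using $\partial_y^\gamma\phi_\epsilon=\epsilon^{-|\gamma|}(\partial^\gamma\phi)_\epsilon$, which is immediate from \eqref{molifier}. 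Substituting into \eqref{EQ:P-adjoint} yields, for almost every $x$,
\[
 (aPu*\phi_\epsilon)(x)=\sum_{|\alpha|\leq m}\sum_{\beta\leq\alpha}
 c_{\alpha,\beta}\,\epsilon^{-|\alpha-\beta|}\int_{\R^n} u(x-y)\,
 \p{\partial^\beta(aa_\alpha)}(x-y)\,(\partial^{\alpha-\beta}\phi)_\epsilon(y)\,dy
\]
with constants $c_{\alpha,\beta}$ depending only on the multi-indices.

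The key point is that each summand is an honest convolution. Put $g_{\alpha,\beta}:=u\cdot\partial^\beta(aa_\alpha)$, extended by zero off $X$; since $a\in C_0^\infty(X)$, the factor $\partial^\beta(aa_\alpha)$ is bounded with support in a fixed compact subset of $X$, so $g_{\alpha,\beta}\in L^p(\R^n)$ with $\n{g_{\alpha,\beta}}_{L^p(\R^n)}\leq C\n{u}_{L^p(X)}$, the constant depending only on $a$ and $P$. Then the display above reads
\[
 aPu*\phi_\epsilon=\sum_{|\alpha|\leq m}\sum_{\beta\leq\alpha}
 c_{\alpha,\beta}\,\epsilon^{-|\alpha-\beta|}\,
 g_{\alpha,\beta}*(\partial^{\alpha-\beta}\phi)_\epsilon .
\]
Now I would apply Young's convolution inequality: choosing $s$ with $1+1/r=1/p+1/s$ (which lies in $[1,\infty]$ precisely because $1\leq p\leq r\leq\infty$), one has $\n{g*\psi_\epsilon}_{L^r}\leq\n{g}_{L^p}\n{\psi_\epsilon}_{L^s}$, and a change of variables shows $\n{\psi_\epsilon}_{L^s(\R^n)}=\epsilon^{-n(1-1/s)}\n{\psi}_{L^s(\R^n)}=\epsilon^{-n(1/p-1/r)}\n{\psi}_{L^s(\R^n)}$. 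Taking $g=g_{\alpha,\beta}$ and $\psi=\partial^{\alpha-\beta}\phi$, each summand is bounded in $L^r(\R^n)$ by $C\epsilon^{-|\alpha-\beta|-n(1/p-1/r)}\n{u}_{L^p(X)}$. Since $|\alpha-\beta|\leq|\alpha|\leq m$ and $0<\epsilon\leq1$, we have $\epsilon^{-|\alpha-\beta|}\leq\epsilon^{-m}$, and summing the finitely many terms gives \eqref{EQ:conv4}.

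I expect the only genuine subtlety to be the one already absorbed into \eqref{EQ:P-adjoint}: because the differential operator in $aPu*\phi_\epsilon$ can be thrown onto the test function rather than onto $u$, no derivatives of the merely $L^p$ function $u$ survive, and the $x$-dependent kernel $R_x\phi_\epsilon$ decomposes, via Leibniz, into a finite sum of kernels that are plain dilations of fixed functions; once that is in hand the argument is just Young's inequality plus bookkeeping of powers of $\epsilon$. The hypothesis $p\leq r$ is used exactly to keep the Young exponent $s$ in $[1,\infty]$, and $\epsilon\leq1$ only serves to replace each $\epsilon^{-|\alpha-\beta|}$ by the single worst power $\epsilon^{-m}$.
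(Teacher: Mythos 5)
Your proof is correct and follows essentially the same route as the paper: both start from the adjoint identity \eqref{EQ:P-adjoint}--\eqref{EQ:P-adj-def}, apply Young's inequality, and track the scaling $\n{\psi_\epsilon}_{L^s}=\epsilon^{-n(1-1/s)}\n{\psi}_{L^s}$ with $1-1/s=1/p-1/r$. The only (cosmetic) difference is that you perform the Leibniz expansion explicitly to obtain a finite sum of genuine convolutions $g_{\alpha,\beta}*(\partial^{\alpha-\beta}\phi)_\epsilon$, whereas the paper keeps the $x$-dependent kernel $R_x\phi_\epsilon$ intact and dominates it uniformly in $x$ before invoking Young's inequality.
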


\begin{proof}[Proof of Lemma \ref{Lemma:1}.]
On account of \eqref{EQ:P-adj-def},
let us define
$$v(x,z)=\sum_{|\alpha|\leq m} \epsilon^{m-|\alpha|}
\partial_z^\alpha\left[ (aa_\alpha)(x-\epsilon z)\phi(z)\right],
$$
so $v$ also depends on $\epsilon>0$.
Taking $z=\epsilon^{-1}y$, so that $\partial_z^\alpha=
\epsilon^{|\alpha|}\partial_y^\alpha$, we get
%\begin{equation}\label{EQ:v-eps}
\[
 \epsilon^{-m} v(x,y/\epsilon)=
 \sum_{|\alpha|\leq m}
 \partial_y^\alpha\left[(aa_\alpha)(x-y)\phi(y/\epsilon)
 \right]=\epsilon^{n}(R_x\phi_\epsilon)(y).
\]
%\end{equation}
Denoting
\begin{equation}\label{EQ:conv5}
v_{x,\epsilon}(y)=\epsilon^{-n}v(x,y/\epsilon)=
\epsilon^m (R_x\phi_\epsilon)(y),
\end{equation}
we get by \eqref{EQ:P-adjoint} that
\[
(aPu*\phi_{\epsilon})(x)=
\epsilon^{-m}u*v_{x,\epsilon}(x).
\]
Taking the $L^r$--norm
of this equality implies that
\[
||aPu*\phi_{\epsilon}||_{L^r(\R^n)}  \leq 
\epsilon^{-m}||u||_{L^p(X)}\n{\sup_{x\in \R^n}|v_{x,\epsilon}|}_{L^q}
\]
by Young's inequality, where $1+1/r=1/p+1/q$.
Furthermore, there is a constant $C>0$ such that the estimate
$$
|v(x,z)|\leq C\sum_{|\alpha|\leq m}|(\partial^\alpha \phi)(z)|
$$
holds for all $x\in \R^n$, $z\in\R^n$, and $0<\epsilon\leq 1$,
because $aa_\alpha\in C^{|\alpha|}_0(X)$.
From this inequality and \eqref{EQ:conv5}, we obtain easily
$$
\n{\sup_{x\in \R^n}|v_{x,\epsilon}|}_{L^q}
\leq C\epsilon^{-n(1-1/q)}\sum_{|\alpha|\leq m}||\partial^\alpha \phi||_{L^q}
$$
for $0<\epsilon\leq 1$.
By all of these arguments together with estimate \eqref{EQ:conv5},
the proof is complete.
\end{proof}

If we assume $Pu=0$ on $X\backslash A$,
we have the expression \eqref{str-thm} by the structure theorem
for distributions, hence the equality
$Q(aPu*\phi_\epsilon)=QS\phi_\epsilon$.
Then by a direct computation of $||QS\phi_\epsilon||_{L^r}$,
we have readily the property
\[
||Q(aPu*\phi_\epsilon)||_{L^r(\R^n)}=O(\epsilon^{-d-\kappa-n(1-1/r)}),
\]
where $\kappa$ denotes the order of $S$.
%Estimate \eqref{o-beh} is a consequence of this.
%The following result is a consequence
%of Lemmas \ref{Lemma:2} and \ref{Lemma:1}, and it generalises
%the case $p=\infty$ which was obtained by different
%methods in Harvey and Polking \cite[Theorem 6.1]{HP}.
We obtain estimate \eqref{o-beh}
if we combine this property with the following result
by Harvey and Polking \cite[Theorem 6.1]{HP},
for which we can also give a short alternative proof based 
on Lemmas \ref{Lemma:2} and \ref{Lemma:1}.

\begin{lemma}\label{Lemma:basic}
Let $1\leq p\leq\infty$ and let $X\subset\Rn$ be an open set.
Let $P$ be a partial differential operator on $X$ of
order $m$ and let $S$ be a 
partial differential operator of order $\kappa$
with constant coefficients. 
Suppose that $u\in L^p(X)$ satisfies
\begin{equation}\label{EQ:structure}
 Pu=S\delta\text{ on $X$}.
\end{equation}
Then we have $\kappa\leq m-n(1-1/p)$.
Moreover, if $1\leq p<\infty$,
then we have $\kappa<m-n(1-1/p)$.
\end{lemma}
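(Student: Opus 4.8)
The plan is to follow the hint and play the sharp lower bound of Lemma~\ref{Lemma:2} against the upper bound of Lemma~\ref{Lemma:1}. We may assume $S\neq0$, and, since the question is local, that $u$ has compact support in $X$ (replacing $u$ by $\tilde\chi u$ with $\tilde\chi\in C_0^\infty(X)$ equal to $1$ near the origin changes neither $Pu=S\delta$ near $0$ nor $aPu$ for cut-offs $a$ supported where $\tilde\chi\equiv1$). Fix such an $a\in C_0^\infty(X)$ with $a(0)=1$ and a non-zero $\phi\in C_0^\infty(\R^n)$. Multiplying \eqref{EQ:structure} by $a$ and expanding by the Leibniz rule gives $aPu=\tilde S\delta$ on $\R^n$, where $\tilde S$ is again a constant-coefficient operator of order $\kappa$ whose homogeneous top order part equals $S_\kappa\neq0$; convolving with $\phi_\epsilon$ gives $aPu*\phi_\epsilon=\tilde S\phi_\epsilon$.

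Applying Lemma~\ref{Lemma:2} to $\tilde S$ with $r=\infty$,
\[
C\,\epsilon^{-\kappa-n}\le\|aPu*\phi_\epsilon\|_{L^\infty(\R^n)}+O(\epsilon^{-\kappa-n+1}),\qquad 0<\epsilon\le1,
\]
with $C>0$ depending only on $\tilde S$ and $\phi$; while Lemma~\ref{Lemma:1} with $r=\infty$ gives $\|aPu*\phi_\epsilon\|_{L^\infty}\le C'\epsilon^{-m-n/p}\|u\|_{L^p(X)}$. Absorbing the $O(\epsilon^{-\kappa-n+1})$ term on the left for small $\epsilon$ and rearranging, one gets $\epsilon^{\,m-n(1-1/p)-\kappa}\le C''$ for all sufficiently small $\epsilon>0$, which forces the exponent to be non-negative, i.e.\ $\kappa\le m-n(1-1/p)$. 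Taking $r=\infty$ is harmless since $p\le\infty$; any $r\ge p$ would do.

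For the strict inequality when $p<\infty$ I would argue by contradiction. Suppose $\kappa=m-n(1-1/p)$, so that $m+n/p=\kappa+n$. Given $\eta>0$, use $L^p$-density of test functions (legitimate since $p<\infty$ and $u$ is compactly supported) to write $u=u_1+u_2$ with $u_1\in C_0^\infty(X)$ and $\|u_2\|_{L^p(X)}<\eta$. Since $aPu_1\in L^\infty(\R^n)$ has compact support, $\|aPu_1*\phi_\epsilon\|_{L^\infty}\le\|aPu_1\|_{L^\infty}\|\phi\|_{L^1}=O(1)$, while Lemma~\ref{Lemma:1} gives $\|aPu_2*\phi_\epsilon\|_{L^\infty}\le C'\eta\,\epsilon^{-m-n/p}=C'\eta\,\epsilon^{-\kappa-n}$. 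Feeding the bound $\|aPu*\phi_\epsilon\|_{L^\infty}\le O(1)+C'\eta\,\epsilon^{-\kappa-n}$ into the Lemma~\ref{Lemma:2} inequality and dividing by $\epsilon^{-\kappa-n}$ gives $C\le O(\epsilon^{\kappa+n})+C'\eta+O(\epsilon)$; letting $\epsilon\to0+$ yields $C\le C'\eta$, and then $\eta\to0$ yields $C\le0$, contradicting $C>0$. Hence $\kappa<m-n(1-1/p)$.

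The only genuine obstacle is this strict refinement. The non-strict bound is a direct clash of the two lemmas, but the $o(\cdot)$ improvement for $p<\infty$ cannot be quoted from \eqref{o-beh}, because that estimate is itself derived from the present lemma; producing it by hand through the $L^p$-density splitting is the natural remedy. One only has to be slightly careful that the constant $C$ in Lemma~\ref{Lemma:2} depends on the fixed data $\tilde S$ and $\phi$ alone---and not on $\eta$ or on the decomposition $u=u_1+u_2$---so that the final passage $\eta\to0$ is legitimate.
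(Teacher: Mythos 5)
Your proof is correct, and its first half is exactly the paper's argument: reduce via the cut-off to $aPu*\phi_\epsilon=\tilde S\phi_\epsilon$ and play the lower bound of Lemma \ref{Lemma:2} against the upper bound of Lemma \ref{Lemma:1}; your choice $r=\infty$ rather than the paper's $r=p$ is immaterial, since both produce the exponent $\kappa-(m-n(1-1/p))$ as in \eqref{EQ:conv41}. Where you genuinely diverge is the strict inequality for $p<\infty$. The paper rules out $\kappa=m-n(1-1/p)$ by truncating near the origin, $u_R(x)=\chi(x/R)u(x)$, and feeding $\|u_R\|_{L^p}\leq\|u\|_{L^p(|x|\leq R)}\to 0$ back into \eqref{EQ:conv41}; you instead keep $u$ and the cut-off $a$ fixed and split $u=u_1+u_2$ with $u_1\in C_0^\infty$ and $\|u_2\|_{L^p}<\eta$, so that the smooth part contributes only $O(1)$ after convolution while Lemma \ref{Lemma:1} controls the remainder by $C'\eta\,\epsilon^{-\kappa-n}$. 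Both devices exploit the same feature of $p<\infty$ (smallness of the $L^p$ mass near $0$ versus density of test functions), but your version has a real advantage: every cut-off, and hence every constant in Lemmas \ref{Lemma:1}--\ref{Lemma:2}, is fixed once and for all, whereas the paper's route requires one to check that the identity $Pu_R=S\delta$ (more precisely, $aPu_R=S\delta$ for a suitable cut-off) survives the truncation with constants uniform in $R$ --- a point the paper passes over quickly. The small price is the routine observation that $aPu_1\in L^\infty$ with compact support, which uses the coefficient regularity $a_\alpha\in C^{|\alpha|}$ of \eqref{EQ:P-formw}; your attention to the fact that the constant in Lemma \ref{Lemma:2} depends only on $\tilde S$ and $\phi$, and to the order of limits $\epsilon\to 0$ before $\eta\to 0$, is exactly what closes the argument.
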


\begin{proof}[Proof of Lemma \ref{Lemma:basic}.]
We start from the expression \eqref{str-thm:conv} since
assumption \eqref{EQ:structure} implies it.
We may assume the top $\kappa^{th}$ order of $S$ is non-zero.
Then the combination of Lemmas \ref{Lemma:2} and \ref{Lemma:1} with $r=p$
yields the estimate
\begin{equation}\label{EQ:conv41}
1 \leq C\epsilon^{\kappa-(m-n(1-1/p))}||u||_{L^p}+O(\epsilon),
\end{equation}
for $0<\epsilon\leq1$, where $C>0$ is a constant
independent of $u$ and $\epsilon$.
Hence $\kappa> m-n(1-1/p)$ implies the contradiction.
Furthermore, let us fix
$\chi\in C_0^\infty(X\cap B(0,1))$ such that $\chi\equiv 1$ in some
neighbourhood of the origin.
Then function
$u_R(x)=\chi(x/R)u(x)$ satisfies $u_R\in L^p(X)$ and
$Pu_R=S\delta$ on $X$, for sufficiently small $R>0$.
Since
$||u_R||_{L^p(X)}\leq ||u||_{L^p(|x|\leq R)}\to 0$
as $R\to 0$ in the case $1\leq p<\infty$,
plugging $u_R$ instead of $u$ in estimate
\eqref{EQ:conv41} with $\kappa=m-n(1-1/p)$
yields a contradiction again.
\end{proof}
%\begin{cor}\label{Prop:FS-cor}
%Let $1\leq p\leq\infty$,
%let $X\subset\Rn$ be an open set,
%let $A=\{0\}$ be the origin,
%and let $P$ be a partial differential operator of
%order $m<n(1-1/p)+k$, for some integer $k$.
%Suppose that $u\in L^p(X)$ satisfies
%$$ Pu=0 \textrm{ on } X\backslash A.$$
%Then there is a partial differential operator $S$
%of order $k-1$ such that
%$Pu=S\,\delta$ on $X$.
%Moreover, if $1\leq p<\infty$, the conclusion
%holds also for $m\leq n(1-1/p)+k$.
%\end{cor}

%\newpage
\section{$L^\infty$-extension problem}
\label{Linfty}

In the last section, we have established Proposition \ref{cor:all-U-Lp2}
by using Lemma \ref{Lemma:basic}.
In the statements of the $L^p$--extension results which follow it,
we have sometimes only weaker results for the case $p=\infty$ than
those for the case $1\leq p<\infty$.
For example, Corollary \ref{cor:FS-stronger} says that
$m=n(1-1/p)$ is the critical index for the $L^p$-extension,
and the $L^p$--extension property actually holds if $p\neq\infty$,
while the critical case $m=n$ for $p=\infty$ is excluded.

But this exceptional case is important since it contains several
interesting situations like the analytic extension
problem, harmonic functions in $\R^2$,
or the case of bounded eigenfunctions or bounded
fundamental solutions for partial differential operators.
It is also related to the Riemann's extension theorem,
which ensures
that if a function is analytic on $\Cpx\backslash\{0\}$ and 
is bounded
near $0$, then it can be extended to an analytic function
on $\Cpx$.
This problem will be discussed again in 
Section \ref{Sec:generalisations}.

Now, we can observe that the $L^\infty$--extension fails
in general in the critical case $m=n$. 
Indeed, let $H_j$ denote the Heaviside
function $H_j(x)=0$ for $x_j<0$ and $H_j(x)=1$ for 
$x_j\geq 0$. Then we easily see that
$\partial_{x_1}H_1=\delta_0$ and
$\partial_{x_1}\partial_{x_2} (H_1 H_2)=\delta_{x_1=0}
\delta_{x_2=0}=\delta_{(0,0)}$
are examples of the failure of the $L^\infty$--extension
in $\R^1$ and $\R^2$, respectively, for operators of
order $m=n$, with similar examples in higher dimensions. 

On the other hand, it can be shown that the 
$L^\infty$--extension holds for the Laplacian $P=\Delta$
in $\R^2$. Indeed, let
$X$ be a ball in $\R^2$ of radius one, and
suppose that $\Delta u=0$ in $X\backslash A$. Then,
using spherical harmonics,
$u$ can be written in the form
$$
 u(x)=h(x)+c\log |x|,
$$    
for some constant $c$ and some function $h$ satisfying
$\Delta h=0$ in $X$. It follows that
$\Delta u=2\pi c\delta$ on $X$. On the other
hand, for the $L^\infty$--extension problem we
assume that $u\in L^\infty(X)$, thus implying that
$c=0$ and, therefore, $\Delta u=0$ in $X$.

The $L^\infty$--extension problem is closely related to
the analysis of bounded eigenfunctions of operators.
For example, let $u\in L^\infty(X)$ be a bounded function
which is an eigenfunction for the operator $P$ on a 
punctured domain, with an eigenvalue $\lambda\in\Cpx$,
i.e. let $u$ satisfy
$$
 Pu=\lambda u \textrm{ on } X\backslash A.
$$
Then $u$ is an eigenfunction of operator $P$ on the whole
domain $X$ if and only if the $L^\infty$--extension holds
for the operator $P(x,D)-\lambda$.

Thus, we will be interested in characterising operators
$P$ of order $m=n$, for which the $L^\infty$--extension
holds. We will show that in this case the 
$L^\infty$--extension holds if and only if the operator
$P$ has no bounded fundamental solutions. In general, operator
$P$ may have fundamental solutions that are bounded or
unbounded. If $P$ has constant coefficients, it is shown
in H\"ormander \cite[Theorem 3.1.1]{Ho} that $P$ always
has a fundamental solution in the space
${\mathcal B}_{\infty,\widetilde{P}}^{loc}(\Rn)$, which
roughly means that its Fourier transform is bounded
with the weight $\widetilde{P}$ (here
we use the notation of \cite{Ho}).
For example, if $P(D)$ is an elliptic
partial differential operator with constant coefficients
of order $m=n$, then its fundamental solution is given 
by the formula going back to G. Herglotz (see e.g.
Shimakura \cite{Sh}):
$$
  E(x)= \frac{i^{-2}}{(2\pi i)^n} \int_{{\mathbb S}^{n-1}}
  \frac{\log (-i\omega\cdot x)}{P(\omega)} dS_w.
$$
This function can be written as a sum of a bounded function
and a function of at most logarithmic growth
(see also \eqref{EQ:John}).
In fact, it can be shown that 
this fundamental solution is bounded for odd $n$
and has at most logarithmic growth at the origin for even $n$.
Thus, for even $n$, we need the logarithmic term to vanish
for the corresponding $P$ to have the $L^\infty$--extension 
property for all $u$. This extends the argument given above
for the Laplacian on $\R^2$.

Now we give a different type of condition
for $L^\infty$-extension to hold in the critical case $m=n$
from those given in Section \ref{Lpext},
but by using Lemma \ref{Lemma:basic} again.
\begin{cor}\label{Prop:FS}
Let $X\subset\Rn$ be an open bounded set,
let $A=\{0\}$ be the origin,
and let $P$ be a partial differential operator of
order $m=n$.
Suppose that $u\in L^\infty(X)$ satisfies
$Pu=0 \textrm{ on } X\backslash A$.
Then either 
$u$ is a non-zero constant multiple of a fundamental solution of $P$
or $Pu=0$ on $X$.
In other words, $L^\infty$--extension fails if and only if
$P$ has a bounded fundamental solution in $X$
\end{cor}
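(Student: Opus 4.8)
The plan is to combine the structure theorem for distributions supported at a single point with Lemma \ref{Lemma:basic} in the limiting case $p=\infty$, $m=n$, and then to read off the two alternatives directly from the resulting normal form of $Pu$.

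First I would observe that, since $Pu=0$ on $X\backslash A$, the distribution $Pu\in\mathcal{D}'(X)$ has $\supp(Pu)\subset A=\{0\}$, hence is a finite linear combination of derivatives of Dirac's delta: there is a partial differential operator $S$ with constant coefficients such that
\[
Pu=S\delta\quad\text{on }X .
\]
This is precisely the structure-theorem step used in the proof of Theorem \ref{cor:all-U-Lp}, now applied directly on $X$. If $S\ne 0$, let $\kappa$ be its order; since $u\in L^\infty(X)$, Lemma \ref{Lemma:basic} with $p=\infty$ and $m=n$ gives $\kappa\le m-n=0$, so $S$ is multiplication by a non-zero scalar. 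In every case, therefore, $S=c$ for some $c\in\Cpx$, and $Pu=c\delta$ on $X$.

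Next I would split into two cases. If $c=0$, then $Pu=0$ on $X$, the second alternative. If $c\ne 0$, then $P(c^{-1}u)=\delta$ on $X$, so $c^{-1}u$ is a fundamental solution of $P$ in $X$ which, lying in $L^\infty(X)$, is \emph{bounded}; thus $u$ is a non-zero constant multiple of a bounded fundamental solution, the first alternative. For the ``in other words'' reformulation: if $P$ admits a bounded fundamental solution $E\in L^\infty(X)$, then $PE=\delta$ vanishes on $X\backslash A$ but $PE=\delta\ne 0$ on $X$, so the $L^\infty$--extension fails; conversely, if the $L^\infty$--extension fails there is $u\in L^\infty(X)$ with $Pu=0$ on $X\backslash A$ and $Pu\ne 0$ on $X$, and the dichotomy just established forces $u=cE$ with $c\ne 0$ and $E$ a bounded fundamental solution of $P$ in $X$.

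I do not expect a genuine obstacle, since the argument merely assembles results already available; the only points deserving a line of care are that an operator with constant coefficients of order $\le 0$ is multiplication by a scalar, and that $Pu=c\delta$ holds on all of $X$ and not just near the origin, which is automatic because the expansion of a distribution supported at $\{0\}\subset X$ into derivatives of $\delta$ is the same whether it is regarded on $X$ or on $\R^n$.
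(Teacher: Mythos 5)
Your proposal is correct and follows essentially the same route as the paper: structure theorem to get $Pu=S\delta$, then Lemma \ref{Lemma:basic} with $p=\infty$, $m=n$ to force $S$ to have order $\le 0$, hence $Pu=c\delta$, and the dichotomy (and the ``in other words'' reformulation) follows exactly as you spell out. The paper leaves these last steps as ``straightforward''; your write-up just makes them explicit.
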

\begin{proof}[Proof of Corollary \ref{Prop:FS}.]
Assumption that $Pu=0$ on $X\backslash A$
implies expression \eqref{EQ:structure}
by the structure theorems for distributions.
Then by Lemma \ref{Lemma:basic} with $p=\infty$ and $m=n$, 
we have that $Pu=c\delta$ with a constant $c$.
Then the conclusion is straightforward.
\end{proof}
For example, fundamental solutions of the Laplace operator
$\Delta$ in $\R^2$ all have logarithmic growth at the origin,
hence the $L^\infty$--extension holds by Corollary \ref{Prop:FS}.
From this observation and Corollary \ref{cor:FS-stronger} 
with $p=\infty$,
we obtain the following well-known 
(at least for $n=2$ when $A$ is a point) result:
\begin{cor}\label{harm}
Let $X\subset\Rn$, $n\geq 2$, 
be an open bounded set and let $A$ be a smooth submanifold
of $X$ of codimension 2.
Every bounded harmonic function on $X\backslash A$
is harmonic on $X$ if it is bounded in a neighbourhood of $A$. 
\end{cor}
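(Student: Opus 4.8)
The plan is to reduce the codimension--$2$ situation to the point case $A=\{0\}$ in the plane, which is exactly the critical case $m=n=2$ governed by Corollary \ref{Prop:FS}, and then to finish with Weyl's lemma. Since $A$ has Lebesgue measure zero and $u$ is bounded on $X\setminus A$, extension by zero gives $u\in L^\infty(X)\subset\mathcal{D}'(X)$, and $\Delta u=0$ already holds on $X\setminus A$; hence it suffices to show $\Delta u=0$ near each $p\in A$. If $n=2$ then $A$ is a discrete set, and near one of its points (translated to the origin) one applies Corollary \ref{Prop:FS} directly with $P=\Delta$ on a small ball $U\subset\R^2$, which is the case $m=n=2$: either $u|_U$ is a non-zero constant multiple of a fundamental solution of $\Delta$, which is impossible because every fundamental solution of $\Delta$ in $\R^2$ equals $\tfrac1{2\pi}\log|x|$ plus a harmonic function and is therefore unbounded near the origin, or $\Delta u=0$ on $U$. (For $A$ a point in dimension $n\geq3$ the configuration is subcritical, $m=2<n$, and Corollary \ref{cor:FS-stronger} with $p=\infty$ applies; the substance of the present statement is the submanifold case, treated below.)

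Assume $n\geq3$ and fix $p\in A$. I first treat the model case where $A$ is a $2$-codimensional affine subspace, so that, writing $x=(x',x'')\in\R^2\times\R^{n-2}$, we may take $A=\{x'=0\}$ on a product neighbourhood $\omega'\times\omega''$ of $p$ (relatively compact balls with $0\in\omega'$) and $\Delta=\Delta_{x'}+\Delta_{x''}$. For $\psi\in C_0^\infty(\omega'')$ set $u_\psi(x')=\int u(x',x'')\psi(x'')\,dx''$, a bounded function on $\omega'$, and likewise $u_{\Delta_{x''}\psi}\in L^\infty(\omega')$. A routine Fubini computation gives the distributional identity
\[
\langle \Delta u,\ \phi\otimes\psi\rangle=\langle \Delta_{x'}u_\psi+u_{\Delta_{x''}\psi},\ \phi\rangle,\qquad \phi\in C_0^\infty(\omega'),
\]
and since $\supp(\Delta u)\subset A=\{x'=0\}$, the left-hand side vanishes whenever $\supp\phi\subset\omega'\setminus\{0\}$. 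Hence $\Delta_{x'}u_\psi=f$ on $\omega'\setminus\{0\}$, with $f:=-u_{\Delta_{x''}\psi}\in L^\infty(\omega')$.

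Next I would absorb this bounded inhomogeneity. Solving $\Delta_{x'}N=f$ on a smaller concentric ball $\omega'_0$ by the Newtonian potential of a compactly supported cut-off of $f$ yields a continuous, hence bounded, function $N$; thus $w:=u_\psi-N\in L^\infty(\omega'_0)$ and $\Delta_{x'}w=0$ on $\omega'_0\setminus\{0\}$. Corollary \ref{Prop:FS} applied to $w$ with $P=\Delta_{x'}$ on $\omega'_0\subset\R^2$ (the critical case $m=n=2$), together with the unboundedness of fundamental solutions of $\Delta_{x'}$ near the origin, forces $\Delta_{x'}w=0$ on all of $\omega'_0$, i.e. $\Delta_{x'}u_\psi=f=-u_{\Delta_{x''}\psi}$ on $\omega'_0$. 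Substituting into the displayed identity gives $\langle\Delta u,\phi\otimes\psi\rangle=0$ for all $\phi\in C_0^\infty(\omega'_0)$ and $\psi\in C_0^\infty(\omega'')$; since finite linear combinations of such product test functions are dense, $\Delta u=0$ on $\omega'_0\times\omega''$. As $p\in A$ was arbitrary, $\Delta u=0$ on $X$, and Weyl's lemma then shows that $u$ coincides a.e.\ with a harmonic function on $X$.

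The routine ingredients---the Fubini identity, the solvability and interior regularity of $\Delta_{x'}N=f$ for bounded $f$, the density of product test functions, and Weyl's lemma---I would not belabour. The genuinely delicate point, which I expect to be the main obstacle, is the reduction step itself: one must absorb the terms produced by the slicing (bounded in the flat model, but in general a derivative of a bounded function when $A$ is curved, since straightening $A$ turns $\Delta$ into a variable-coefficient second-order elliptic operator and the slicing then creates mixed-derivative contributions) into a suitably regular particular solution, so that Corollary \ref{Prop:FS}---equivalently Lemma \ref{Lemma:basic} with $p=\infty$, which forces the delta-supported source to have order $0$---can be applied. The reduced planar operators stay second order with logarithmically singular fundamental solutions, so the scheme goes through, but carrying the lower-order contributions through the curved case requires care.
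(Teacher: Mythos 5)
Your flat-model argument is correct and is, in spirit, exactly the paper's: reduce along $A$ to the planar critical case $m=k=2$, invoke Corollary \ref{Prop:FS} (equivalently Lemma \ref{Lemma:basic} with $p=\infty$, which forces the delta-supported source to have order zero), and rule out the remaining constant because every fundamental solution of $\Delta$ in $\R^2$ is $\tfrac{1}{2\pi}\log|x'|$ plus a smooth function, hence unbounded --- the paper makes precisely this last observation in the sentence preceding the corollary. Where you genuinely differ is the reduction mechanism. You slice the \emph{solution}: you form $u_\psi$, obtain a planar Poisson equation off the origin with bounded right-hand side, and absorb that inhomogeneity by a Newtonian potential before applying Corollary \ref{Prop:FS}. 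The paper (Section \ref{Sec:generalisations}) slices the \emph{singular distribution} instead: since $Pu$ is supported on $A$, the structure theorem gives $aPu=S(x',D'')\delta(x'')$ in straightened coordinates, pairing with $\psi(x')$ yields a constant-coefficient sum of delta-derivatives on $\R^{k}$, and the convolution estimates of Sections \ref{Main} and \ref{Lpext} are rerun with $n$ replaced by the codimension $k$. Your route is more explicit and self-contained (at the cost of the Newtonian-potential step); the paper's route never decomposes the operator.

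That difference is exactly what matters for the one genuine gap in your write-up: the curved case, which you flag but do not carry out, and which is the actual content of the statement for $n\ge 3$. Your reduction needs $\Delta=\Delta_{x'}+\Delta_{x''}$, i.e.\ needs $A$ to be locally affine. After straightening a curved $A$, the coefficients depend on $x''$ and the second-order terms no longer factor through the slicing: one obtains second-order $x'$-derivatives of \emph{different} sliced functions $u_{a\psi}$ plus commutator terms, so there is no single planar equation for $u_\psi$ to which Corollary \ref{Prop:FS} applies, and the Newtonian-potential absorption does not close the argument as written. The paper's device is immune to this, because it only uses that $Pu$ is supported on $\{x''=0\}$ together with the $k$-dimensional analogues of Lemmas \ref{Lemma:1} and \ref{Lemma:2}, which already allow variable coefficients. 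To finish your proof in general you would either adopt that device, or show directly that $\Delta u=\mu(x'')\otimes\delta(x')$ with $\mu\neq 0$ forces a logarithmic (hence unbounded) singularity of $u$ along $\supp\mu\subset A$. As it stands, your argument proves the corollary for $n=2$ and for locally flat $A$ only.
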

The case of a smooth manifold $A$ of dimension greater than
zero will follow from the arguments of the following section.

Finally, let us give an additional clarification of
conditions \eqref{EQ:cond-gamma-cora} and
\eqref{EQ:cond-gamma-cor} in the critical case of $L^\infty(X)$
and $m=n$:
\begin{rem}\label{rem:weakconv}
Let us take $p=r=\infty$ in Theorem \ref{cor:all-U-Lp}.
Condition \eqref{EQ:cond-gamma-cora} requires that for some $\phi$,
$\epsilon_j$, and
$Q$ we have
\begin{equation}\label{EQ:cond-gamma-sp}
 \epsilon_j^{n+d}
 Q (aPu*\phi_{\epsilon_j})\to 0 \textrm{ as } 
 \epsilon_j\to 0+,
\end{equation} 
with the convergence in the $L^\infty$--norm. We claim 
that it actually always holds in
the weak$^*$ topology of $L^\infty(X)$
(for a subsequence which we may in turn choose to be the
sequence $\{\epsilon_j\}$ in
\eqref{EQ:cond-gamma-cora}). 
Indeed, since by Proposition \ref{cor:all-U-Lp2}
the norm $|| Q (aPu*\phi_{\epsilon_j})||_
{L^\infty}$ is of order $\epsilon^{-d-m}$ for
$\epsilon\to 0+$, it follows that the family
in \eqref{EQ:cond-gamma-sp} 
is uniformly bounded in $L^\infty(X)$, provided that
$m\leq n$. Since the
balls are compact in the weak$^*$ topology of $L^\infty$,
it follows that there is a subsequence of $\epsilon_j$,
for which the corresponding subsequence of the family
in \eqref{EQ:cond-gamma-sp}
converges in the weak$^*$ topology. Since the distributional
limit is zero, it follows that the subsequence also converges
to zero in the weak$^*$ topology of $L^\infty(X)$.
\end{rem}

%\newpage
\section{Concluding remarks}
\label{Sec:generalisations}
%\section{Generalisations for systems 
%and higher dimensional removable sets}
%\label{Sec:generalisations}
The results in this paper can be
extended to the case when the removable set
$A$ under study is not a point but a smooth submanifold of $X$
of codimension $k$.
Let us take local coordinates $x=(x_1,\dots,x_n)$ on $X$ so that 
$$A=\{(x',x''):x''=0\},$$
where we denote $x'=(x_1,\dots,x_{n-k})$, $x''=(x_{n-k+1},\dots,x_n)$.
Let us also use the notation
$$D'=(D_1,\dots,D_{n-k}),\quad D''=(D_{n-k+1},\dots,D_n),$$
where $D_j=\partial/\partial x_j$ $(j=1,\dots,n)$.
Then, by the structure theorem, distribution $f$ on $\R^n$ 
with support in $A$ is expressed as 
$$f=S(x',D'')\delta(x'')$$
for a differential operator $S(x',D'')$ with distribution coefficients
in $x'$ and independent of $D'$, 
where $\delta(x'')$ denotes the delta function in $x''$.
%In other words, for any $\psi(x')\in C^\infty_0(\R^{n-k})$,
%the distribution
%$$
%\phi(x'')\mapsto \langle f,\psi(x')\otimes\phi(x'')\rangle
%$$
%on $\R^k$ is expressed as
%$$\langle f,\psi(x')\otimes\cdot\rangle =S(D'')\delta(x''),$$
%for some partial differential operators $S(D'')$ on $\R^k$.

On account of these observation, 
for every $a(x)\in C^\infty_0(X)$
and $\psi(x')\in C^\infty_0(\R^{n-k})$,
condition
$$Pu=0 \textrm{ on } X\backslash A$$
implies
\[
\langle aPu,\psi(x')\otimes\cdot\rangle =S(D'')\delta(x'')
\text { on $\R^k$},
\]
which is a starting expression instead of \eqref{str-thm}.
Then all the results in Section 2 and Section 3 for
the case $A=\{0\}$ can be easily generalised to the case $A=\{(x',x''):x''=0\}$
by replacing the dimension $n$ by the codimension $k$ of the set $A$.
For example, Theorem \ref{cor:all-U-Lp}
is true in such a general case
if condition \eqref{EQ:cond-gamma-cora} (similarly \eqref{EQ:cond-gamma-cor})
is replaced by
$$
\epsilon_j^{k(1-1/r)+d}||
 Q(a Pu*\phi_{\epsilon_j})(0,x''))
 ||_{L^r(\R^{k})}\to 0 \textrm{ as } \epsilon_j\to 0+.
$$
Proposition \ref{cor:all-U-Lp2} is also true if we replace
 $|| Q(a Pu*\phi_{\epsilon}) ||_{L^p(\R^{n})}$
in \eqref{O-beh} and \eqref{o-beh}
by
 $|| Q(a Pu*\phi_{\epsilon})(0,x''))||_{L^p(\R^{k})}$.
Hence Corollary \ref{cor:FS-stronger} holds
when we replace $n(1-1/p)$ by $k(1-1/p)$.

It is also easy to see that all the results in this paper,
including the discussion above, are still true in the case
when $P$ is a matrix of partial differential operators of order $m$,
and $u$ is a vector of functions in $L^p(X)$.
In this way, we can also discuss the case when
$P=P(z,D_z)$ is a partial differential operators of order
$m$ on the complex space $\Cpx^n$.
In fact, equation $Pu=0$ on $\Cpx^n$ can be written as a system of
equations on $\R^{2n}$ if we regard the complex variable $z$ as
a pair of real variables $(x,y)$ and consider the real and 
imaginary part of the equation independently.

Furthermore, generalisation of
the results to the case of higher dimensional $A$ as above implies
that the critical case of the $L^\infty$-extension
problem corresponds to the analytic extension 
over smooth real surfaces $A$ in $\Cpx^n$ 
of real codimension one. 
Indeed, the standard Riemann's extension 
problem is the subcritical case of the $L^\infty$--extension
problem, while the extension problem over curves in
$\Cpx$ is the critical case since the codimension of a curve
equals to the order of the Cauchy--Riemann operator,
and it can be easily seen that the analytic extension
fails in this case (in our terminology the $L^\infty$--extension
property fails for operator
$\overline{\partial}$ in the critical case).
For example, take an analytic function in
the unit disc in $\Cpx$, cut the disc by a straight line and then
shift the graph of the function in one of
the half-discs along this line.

In particular, Corollary \ref{Prop:FS} corresponds to
the analogue of the
Riemann's extension theorem for analytic functions over
real curves in $\Cpx$. 
This corresponds to the existence of bounded fundamental 
solutions for the Cauchy--Riemann operator restricted to
smooth real curves in the complex plane.

%% ----------------------------------------------------------------------------
%%     References
%% 
%% Please use the standard thebibliography environment to include
%% your references, and try to use labels for the bibitems, which
%% are uniquely assigned to you in order to avoid conflicts with other authors.
%% You can achieve unique labels by using our on initials before every label.
%% ----------------------------------------------------------------------------

%% -------------------------------------------------------------------------------
%%   Bibliograph ends here
%%-------------------------------------------------------------------------------
%%%
%%%
%%%
\end{document}